\documentclass[a4paper,12pt]{amsart}
\usepackage{hyperref,latexsym}
\usepackage{enumerate}

\theoremstyle{plain}
\newtheorem{theorem}{Theorem}%[section]
\newtheorem{lemma}{Lemma}
\newtheorem{corollary}{Corollary}

\theoremstyle{definition}

\theoremstyle{remark}

\begin{document}

\title[Cubic and Quartic integrals]
      {Cubic and Quartic integrals \\for geodesic flow on 2-torus
      via system of Hydrodynamic type}

\date{2 October 2011}
\author{Misha Bialy and Andrey E. Mironov}
\address{M. Bialy, School of Mathematical Sciences, Raymond and Beverly Sackler Faculty of Exact Sciences, Tel Aviv University,
Israel} \email{bialy@post.tau.ac.il}
\address{A.E. Mironov, Sobolev Institute of Mathematics and
Laboratory of Geometric Methods in Mathematical Physics, Moscow State University }
\email{mironov@math.nsc.ru}
\thanks{M.B. was supported in part by ISF grant 128/10 and A.M. was
supported by RFBR grant 09-01-00598-a, and by the Leading Scientific
Schools grant NSh-7256.2010.1. It is a pleasure to thank these funds
for the support}

\subjclass[2000]{35L65,35L67,70H06 } \keywords{Integral of motion,
geodesic flows, Riemann invariants, Systems of Hydrodynamic type,
Genuine nonlinearity}

\begin{abstract}In this paper we deal with the classical question
of existence of polynomial in momenta integrals for geodesic flows
on the 2-torus. For the quasi-linear system on the coefficients of
the polynomial integral we study the region (so called elliptic
region) where there are complex-conjugate eigenvalues. We show that
for quartic integrals the other two eigenvalues are real and
necessarily genuinely nonlinear. This observation together with the
property of the system to be Rich (Semi-Hamiltonian) enables us to
classify elliptic regions completely. The case of complex-conjugate
eigenvalues for the system corresponding to the integral of degree 3
is done similarly. These results show that if new integrable
examples exist they could be found only within the region of
Hyperbolicity of the quasi-linear system.

\end{abstract}

\maketitle

%%%%%%%%%%%%%%%%%%%%%%%%%%%%%%%%%%%%%%%%%%%%%%%%%%%%%%%%%%%%%%%%%%%%%%%%%%
\section{Introduction}
\label{sec:intro} Let $\rho$ be a Riemannian metric on the 2-torus
$\mathbb{T}^2=\mathbb{R}^2/\Gamma$, $\rho^t$ denotes the geodesic
flow.  Let $F_n:T^* \mathbb{T}^2$ be a function on the cotangent
bundle which is homogeneous polynomial of degree $n$ with respect to
the fibre (notice that this condition is invariant with respect to
the change of coordinates on the configuration space
$\mathbb{T}^2$). We are looking for such an $F_n$ which is an
integral of motion for the geodesic flow $\rho^t$, i.e.
$F_n\circ\rho^t=F_n$. This question leads immediately to a system of
quasi-linear equations on coefficients of $F_n$ and this is the aim
of the present paper to study it for the degrees $n=3, 4$. Let us
mention that there are classically known examples of the geodesic
flows on the 2-torus which have integrals $F$ of degree one and two.
These examples can be most naturally described with the help of the
conformal coordinates on the covering plane (we refer to the book by
Bolsinov, Fomenko \cite{BF} for the history and discussion of this
classical question):
$$
 (c)
  \ \ \ \ \ \ \ ds^2=\Lambda(q_1,q_2)(dq_1^2+dq_2^2),\
  H=\frac{1}{2\Lambda}(p_1^2+p_2^2).
$$
Here $\Lambda$ is a positive function periodic with respect to the
lattice  $\Gamma$. With these coordinates it can be shown that the
cases of linear and quadratic integral for the geodesic flow
correspond to those conformal factors $\Lambda$ which can be written
as a sum of two functions of one variable:
$$
 \Lambda=f_1(m_1q_1+n_1q_2)+f_2(m_2q_1+n_2q_2) \quad with \quad m_1m_2/n_1n_2=-1,
$$
(where one of the functions must be constant in the case of linear
in momenta integral). Such metrics are called Liouville metrics.  We
shall call a polynomial integral $F_n$ reducible if it can be
written as a polynomial function of the Hamiltonian $H$ and some
other polynomial integral of degree smaller than $n$. In the other
case $F_n$ is called irreducible. Let us mention that there are no
known examples of Riemannian metrics  on the 2-torus having
irreducible integrals of degrees higher than two. This problem is
related also to the so called Birkhoff conjecture on integrable
convex billiards in the plane (see also \cite{B3}, \cite{T}). In
\cite{KD} (see also \cite{KT}) Kozlov and Denisova
 proved that if $\Lambda$ is
trigonometric polynomial then the geodesic flow has no irreducible
polynomial integrals of degree higher than two. Remarkably there do
exist non-trivial examples of geodesic flows on 2-sphere with
integrals which are homogeneous polynomials of degrees 3 and 4.
These examples (see \cite{Sel},\cite{Ki},\cite{BF2},
\cite{DM},\cite{Tsiganov}, \cite{Valent}) were inspired by classical
integrable cases of Goryachev-Chaplygin and Kovalevskaya in rigid
body dynamics. Let us mention also that there are no nontrivial
analytic integrals of the geodesic flows on surfaces of genus higher
than one by Kozlov's theorem  \cite{Koz}.

In what follows we shall work also with  another global coordinate
system on the torus called Semi-geodesic (or equidistant). It is
built with the help of one regular Liouville invariant torus of the
geodesic flow, call it $L$, which projects diffeomorphically to the
configuration space $\mathbb{T}^2$. It follows from \cite{B0} that
such an invariant torus always exists and the Riemannian metric can
be written in the form
$$
 (s)
  \ \ \ \ \ \ \ ds^2=g^2(t,x)dt^2+dx^2,\
  H=\frac{1}{2}\left(\frac{p_1^2}{g^2}+p_2^2\right),
$$
where the family  $\{t=const\}$ is the family of geodesics of the
chosen invariant torus. Interestingly for our approach both
coordinates will play a very important role while each of them (c)
and (s) have their own advantages. In both coordinate systems the
condition of flow-invariance can be reduced to a quasi-linear system
of equations on the coefficients of polynomial $F_n$. For the case
(c) this system gets the form
$$
  A_1(U) U_{q_1} + A_2(U)U_{q_2}=0\eqno{(1)}
$$
while for the case (s) it has a form of evolution equations:
$$
 U_t+A(U)U_x=0.\eqno{(2)}
$$
Here U is a vector function of coefficients and $A_i(U)$ and $A(U)$
are $n\times n$ matrices.

Let us mention here a very important advantage of the second system.
Characteristics of (2) are always transversal to the $x-$direction
but for the system (1) they might rotate. This fact complicates the
analysis along characteristics. Thus in what follows we work with
(2) and then translate the results to (1). (It may happen however
that one can use a recent result \cite{B1} to overcome this
difficulty).

It was proved in our recent paper \cite{BM} that the system (2) is
in fact Rich or Semi-Hamiltonian system. Among other things this
enables following P. Lax and D. Serre (see Serre's book \cite{S}) to
analyze blow up of smooth solutions along characteristics. Such an
analysis was performed for other Rich quasi-linear system (a
reduction of Benney' chain) in \cite{B}. Here we shall apply the
same ideas to the system (2). As usual, for such type of systems one
does not know a-priori that the system is Hyperbolic or it may have
regions with complex eigenvalues. In this paper we shall concentrate
on the cases of degrees 3 and 4. Our main results are for the so
called "elliptic" regions, we shall denote them by $\Omega_e$, these
are regions on the configuration space where the matrix $A(U)$ has
distinct eigenvalues such that two of them are complex-conjugate.
Then it follows that for $n=3$ the third one is obviously real. Also
for $n=4$ the other two eigenvalues must be real as well (see
later). Our main results for the cases  $n=3,4$ say that the
"elliptic" regions are "standard" that is the metric on them is
classically integrable:

\begin{theorem} {\it Let $n=3$, then one has the following alternative:

Either metric is flat in the region $\Omega_e$ or $F_3$ is reducible
on $\Omega_e$ , that is it can be written as combination of $H$ and
$F_1$
$$
 F_3=k_1F_1^3+2 k_2HF_1
$$
for some explicit constants $k_1$, $k_2$.}
\end{theorem}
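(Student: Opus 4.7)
The plan is to exploit the interplay between the unique real eigenvalue of $A(U)$ in the region $\Omega_e$ and the Rich (semi-Hamiltonian) structure of system (2) established in \cite{BM}. Since $A(U)$ is a $3\times 3$ real matrix with two complex-conjugate eigenvalues in $\Omega_e$, the third eigenvalue $\lambda$ is automatically real. The semi-Hamiltonian property then produces a real Riemann invariant $R$ associated with $\lambda$, satisfying the transport equation
$$
R_t + \lambda(U)\,R_x = 0
$$
on all of $\Omega_e$. First I would write down $\lambda$ and $R$ explicitly by diagonalizing (in the real direction) the cubic system (2), using the matrix $A(U)$ already computed in \cite{BM}.

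The central ingredient is the genuine nonlinearity of this real characteristic field, i.e.\ $\partial\lambda/\partial R\ne 0$, which I would verify by direct computation on the cubic system (the analogous property for the two real eigenvalues in the quartic case is exactly the one highlighted by the authors in the abstract). Once genuine nonlinearity holds, one introduces the Lax-type variable $p=(\partial\lambda/\partial R)\,R_x$; differentiating the transport equation yields the Riccati equation
$$
p_t + \lambda\, p_x = -p^2
$$
along the real characteristic, exactly as in Serre's book \cite{S} and as carried out by Bialy for Benney's chain in \cite{B}. Since the coefficients of $F_3$ are globally smooth on the compact torus $\mathbb{T}^2$, the variable $p$ must remain bounded for all $t\in\mathbb{R}$. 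A bounded solution of this Riccati equation vanishes identically, so $R_x\equiv 0$ on $\Omega_e$, and the transport equation then forces $R$ to be a constant.

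The last step is to translate constancy of $R$ into structural information on the metric and on $F_3$. I would substitute $R=\mathrm{const}$ back into system (2) for $n=3$ in the semi-geodesic form $(s)$ and read off the constraints it places on the coefficients $a_0(t,x),\dots,a_3(t,x)$ of the cubic polynomial $F_3$ and on $g(t,x)$. The expected dichotomy is this: either the constraints force $g$ itself to be constant, i.e.\ the metric is flat; or $g_t\equiv 0$, in which case $p_1$ is a linear integral $F_1$, and the remaining constraints pin down the coefficients of $F_3$ to the expression $F_3=k_1 F_1^3+2k_2 HF_1$, with the constants $k_1,k_2$ determined by matching coefficients.

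The main obstacle I anticipate is the verification of genuine nonlinearity of $\lambda$ on $\Omega_e$, which involves a nontrivial computation in the Riemann-invariant coordinates, and requires handling the locus where $\partial\lambda/\partial R$ could vanish (presumably by showing that on this locus the remaining structure of (2) already forces one of the two alternatives). A secondary technical point is the case analysis after $R$ is shown to be constant: it must be carried out carefully enough both to recognize the reducible form as the only non-flat possibility and to identify the explicit constants $k_1,k_2$.
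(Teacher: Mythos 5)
Your proposal hinges on proving genuine nonlinearity of the real characteristic field for $n=3$ and then running the Lax--Riccati blow-up argument to conclude that the real Riemann invariant $r_3$ is constant. This is not what the paper does for $n=3$, and more importantly it cannot work: if $\partial\lambda_3/\partial r_3\neq 0$ held and the Riccati argument applied, you would conclude $(r_3)_x\equiv 0$, hence (together with the constancy of the complex invariants) that \emph{all} coefficients $a_i$ are constant and the metric is flat on $\Omega_e$ --- i.e.\ your mechanism erases the second branch of the alternative. But the theorem explicitly allows a non-flat reducible case $F_3=k_1F_1^3+2k_2HF_1$ with $g=g(x-\lambda t)$ non-constant; there $r_3$ varies while $\lambda_3$ stays constant, which is exactly the statement that the real field is \emph{linearly degenerate}, not genuinely nonlinear. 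The paper's genuine-nonlinearity lemma (Lemma 3) is proved only for $n=4$, and even there only under the extra hypothesis that the imaginary parts of the complex Riemann invariants do not vanish --- a hypothesis that is precisely what separates the ``flat'' branch from the ``reducible'' branch. You cannot expect it unconditionally for $n=3$.

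The second gap is that you never use the complex-conjugate pair $r_{1,2}$ at all. The paper's first and decisive step (Theorem 3) is a strong maximum principle: the equation satisfied by $\mathrm{Im}\,r_{1,2}$ is elliptic, so $r_{1,2}$ are constant on $\Omega_e$. Only then does $\lambda_3$ become a function of $r_3$ alone, the $r_3$-characteristics become straight lines, and --- using that $\lambda_1\neq\lambda_2$ in the interior while $\lambda_1=\lambda_2$ on $\partial\Omega_e$ --- these lines cannot exit $\Omega_e$, so $\Omega_e$ is a strip and $U=U(x-\lambda t)$ is a simple wave. Without this step your Riccati argument also lacks the completeness of characteristics it needs when $\Omega_e\neq\mathbb{T}^2$. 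The paper then finishes not by any blow-up analysis but by writing the simple-wave condition as the ODE system $A(U)U'=\lambda U'$, integrating it explicitly, and reading off the dichotomy (either $a_2=g$ is constant, or the coefficients of a cubic polynomial in $a_2$ vanish, which fixes the constants $k_1,k_2$). Your final ``substitute $R=\mathrm{const}$ and read off constraints'' step is too vague to substitute for this explicit integration, and your identification of the linear integral as $p_1$ (from $g_t\equiv 0$) misses the general form $F_1=p_1+\lambda p_2$ arising from $g=g(x-\lambda t)$.
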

\begin{corollary} {\it
We have for the conformal model (c):

Either metric $\rho$ is flat on $\Omega_e$ or
$\Lambda=\Lambda(mq_1+nq_2)$ on $\Omega_e$ for some reals $m,n$; If
in addition $\rho$ is known to be real analytic metric on ${\mathbb
T}^2$ then $\Lambda=\Lambda(mq_1+nq_2)$ everywhere on the whole
torus ${\mathbb T}^2$  and the flow $\rho^t$ necessarily has a first
power integral on the whole torus ${\mathbb T}^2$.}
\end{corollary}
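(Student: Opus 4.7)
\emph{Proof plan.}
The strategy is to combine Theorem~1 with the classical characterization of linear in momenta polynomial integrals in the conformal coordinates~$(c)$: for a metric $ds^2 = \Lambda(dq_1^2 + dq_2^2)$, the existence of such an integral is equivalent to $\Lambda$ being a function of a single linear combination of $q_1, q_2$ (the limiting case of the Liouville family $\Lambda = f_1 + f_2$ recalled in the introduction, when one of the summands is constant). Applying Theorem~1 to the conformal model gives the dichotomy on $\Omega_e$: either $\rho$ is flat, which is the first alternative, or $F_3 = k_1 F_1^3 + 2 k_2 H F_1$ and, by the definition of reducibility, $F_1$ itself is a polynomial integral of degree one on $\Omega_e$.

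I would then translate the existence of $F_1$ into a condition on $\Lambda$. Writing $F_1 = a(q) p_1 + b(q) p_2$ and expanding $\{H, F_1\} = 0$ with $H = (p_1^2 + p_2^2)/(2\Lambda)$, the vanishing of the coefficients of $p_1^2$, $p_2^2$, and $p_1 p_2$ produces the Cauchy--Riemann equations $a_{q_1} = b_{q_2}$, $a_{q_2} = -b_{q_1}$ together with the real relation $2\Lambda a_{q_1} + a\Lambda_{q_1} + b\Lambda_{q_2} = 0$. The first pair says $f = a + i b$ is holomorphic in $z = q_1 + i q_2$ on $\Omega_e$; excluding the flat case (which returns to the first alternative), a local rigidity argument forces $(a, b)$ to be constants on $\Omega_e$, whereupon the extra equation reduces to $a\Lambda_{q_1} + b\Lambda_{q_2} = 0$. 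Setting $(m, n) = (-b, a)$ yields $\Lambda = \Lambda(m q_1 + n q_2)$ on $\Omega_e$, proving the first assertion.

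For the real-analytic addendum I would use the identity principle: once the first-order relation $(a\partial_{q_1} + b\partial_{q_2})\Lambda = 0$ holds on the nonempty open set $\Omega_e$ with fixed constants $a, b$ and $\Lambda$ is real-analytic on $\mathbb{T}^2$, it extends by analytic continuation (lifted to the universal cover $\mathbb{R}^2$) to the entire torus. Hence $\Lambda = \Lambda(m q_1 + n q_2)$ globally, and $F_1 = a p_1 + b p_2$ becomes a polynomial integral of degree one defined everywhere on $T^*\mathbb{T}^2$. The genuinely delicate step in this plan is the local rigidity claim in the middle paragraph: on the whole torus constancy of the holomorphic pair $(a, b)$ follows immediately from Liouville's theorem for bounded holomorphic functions, but on the proper open subset $\Omega_e$ it must be derived from the extra real constraint in conjunction with the fact that $\Lambda$ extends smoothly and periodically to the whole torus, ruling out the exotic non-translational Killing vectors (radial scalings, constant-curvature isometries) that a purely local conformal metric could otherwise support.
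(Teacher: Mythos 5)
Your reduction of $\{H,F_1\}=0$ to the Cauchy--Riemann system for $a+ib$ together with the transport equation $a\Lambda_{q_1}+b\Lambda_{q_2}=0$ is correct, and you have correctly located the crux of the matter. But you then leave that crux unproved: you assert that constancy of $(a,b)$ on the proper open subset $\Omega_e$ ``must be derived from the extra real constraint in conjunction with'' global smoothness of $\Lambda$, ``ruling out the exotic non-translational Killing vectors,'' without actually carrying out any such derivation. This is a genuine gap, and it is the only nontrivial content of the corollary. On all of ${\mathbb T}^2$ holomorphy plus periodicity of $a+ib$ gives constancy at once, but on $\Omega_e$ alone nothing in your plan excludes a non-translational local Killing field; the paper itself opens its proof of this corollary by stressing that the explicit identity $F_3=k_1F_1^3+2k_2HF_1$ ``is absolutely necessary for the proof'' precisely because $\Omega_e$ may be a proper subset of the torus and ``it is not clear why its coefficients should be constants.''

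The missing mechanism is to import global information from $F_3$ rather than from $F_1$. In Kolokoltsov's complex notation $p=p_1-ip_2$, the extreme coefficients $A_0,A_3$ of the globally defined integral $F_3=\sum A_ip^{3-i}\bar p^{\,i}$ are holomorphic on the whole torus and hence constant (Lemma 1 of the paper). Writing $F_1=b_0p_1+b_1p_2=\frac{b_0+ib_1}{2}p+\frac{b_0-ib_1}{2}\bar p$ and comparing $p^3$-coefficients in $F_3=k_1F_1^3+2k_2HF_1$ (the term $HF_1$ contributes nothing to $p^3$ since $H$ is proportional to $p\bar p$) gives $A_0=k_1\bigl(\frac{b_0+ib_1}{2}\bigr)^3=\mathrm{const}$, which forces $b_0,b_1$ to be constant on $\Omega_e$. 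From there your argument does go through: the last equation of the conformal quasi-linear system yields $b_0(1/\Lambda)_{q_1}+b_1(1/\Lambda)_{q_2}=0$, hence $\Lambda=\Lambda(b_1q_1-b_0q_2)$ on $\Omega_e$, and real analyticity propagates this to all of ${\mathbb T}^2$. To salvage your own route you would have to supply an actual proof of constancy of $(a,b)$ on $\Omega_e$; Liouville's theorem is unavailable there, and the extra real constraint by itself does not rule out, say, a rotational Killing field of a locally rotationally symmetric $\Lambda$.
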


\begin{theorem} {\it Let $n=4$, then the following alternative
holds: Either metric $\rho$ is flat on $\Omega_e$ or $F_4$ is
reducible, that is it can be expressed on $\Omega_e$ as
$$
 F_4=k_1F_2^2+2k_2HF_2+4k_3H^2
$$
where $F_2$ is a polynomial of degree 2 which is an integral of the
geodesic flow on $\Omega_e$ and $k_i$ are constants.}
\end{theorem}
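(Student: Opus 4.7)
The plan is to apply a Lax-type blow-up argument along the two real characteristics of the diagonalized system (2) on $\Omega_e$, conclude that the corresponding Riemann invariants must be constant, and then identify the resulting two-parameter family of quartic integrals with the reducible Ansatz in the statement. All three ingredients needed for the blow-up argument are available by earlier parts of the paper: system (2) is rich (semi-Hamiltonian) by \cite{BM}, the matrix $A(U)$ has exactly one complex-conjugate pair of eigenvalues on $\Omega_e$ with the remaining two eigenvalues real, and these two real characteristics are genuinely nonlinear.

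Write the Riemann invariants of the rich system as $r_1,\,r_2=\bar r_1,\,r_3,\,r_4$, so that the system takes the diagonal form
$$
 r^i_t + \lambda_i(r)\, r^i_x = 0, \qquad i=1,2,3,4,
$$
with $\lambda_1=\bar\lambda_2$ and $\lambda_3,\lambda_4$ real. Since $U$ is globally defined on the compact torus, the solution is smooth and periodic in both $t$ and $x$. For each of $i=3,4$, richness together with the genuine nonlinearity $\partial_{r_i}\lambda_i\neq 0$ yields, following Lax (cf.\ \cite{S}) and the argument already used for another rich system in \cite{B}, a Riccati-type inequality for $r^i_x$ along the $i$-th characteristic. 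Periodicity combined with global boundedness of the solution rules out any finite-time gradient catastrophe, and a standard averaging argument along the $i$-th characteristic forces $r^i_x\equiv 0$, hence $r_3,\,r_4$ are constants on $\Omega_e$.

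With the two real Riemann invariants frozen, the coefficients of $F_4$ are confined to a two-parameter family, parametrized by the complex-conjugate pair $r_1,r_2$. The remaining step is to match this family with the family of reducible quartics $F_4 = k_1 F_2^2 + 2k_2 H F_2 + 4k_3 H^2$, which is naturally parametrized by a quadratic integral $F_2$ once the constants $k_j$ are fixed. I would carry this out by computing the Riemann invariants explicitly for the reducible Ansatz, checking that its two real ones are precisely the frozen constants $r_3,r_4$ up to relabelling, and verifying that the quadratic polynomial playing the role of $F_2$ Poisson-commutes with $H$ on $\Omega_e$, so that it is in fact a geodesic integral there. The complementary case, in which the real eigenvalue structure degenerates (for instance $\lambda_3=\lambda_4$ or a real eigenvalue coalesces with $\mathrm{Re}\,\lambda_1$), is expected to correspond precisely to the flat alternative in the statement.

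The main obstacle lies in this final matching step: one has to exhibit the explicit correspondence between the frozen values $r_3,r_4$ and the constants $k_1,k_2,k_3$, and show that the quadratic polynomial implicitly obtained as a "square root" of the restricted $F_4$ is a genuine integral of the geodesic flow rather than a formal algebraic expression in the coefficients of $F_4$ and $H$. A secondary technical point is that the Lax/Riccati analysis in Step~2 has to be carried out in the presence of two complex Riemann invariants; the semi-Hamiltonian commutation relations still provide the identities needed to reduce to a scalar real Riccati inequality for $r^i_x$ with $i=3,4$, but real and imaginary parts of the cross terms must be separated with care.
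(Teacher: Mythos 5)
Your proposal inverts the logic of the actual argument and, as a result, both alternatives of the theorem come out wrong. First, you never address the complex pair $r_{1,2}$: the paper's first and indispensable step is a strong maximum principle (Theorem 3) showing that the equation satisfied by $v=\mathrm{Im}\,r_{1,2}$ is elliptic, hence $r_{1,2}$ is \emph{constant} on $\Omega_e$ and moreover real whenever $\Omega_e$ has a boundary. Second, the genuine nonlinearity of the real eigenvalues $\lambda_{3,4}$ that you take as given is not unconditional: the paper's Lemma 3 establishes it \emph{only under the hypothesis} $\mathrm{Im}\,r_{1,2}\neq 0$, and the proof of that lemma shows exactly why --- if $\hat G_4$ were divisible by the quadratic $\gamma=(a_3+a_1)s^2+4a_0s-(a_1+a_3)$, Viete's formulas would force $\mathrm{Im}\,r_{1,2}=0$. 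In other words, genuine nonlinearity fails precisely in the reducible regime. It must fail there: Liouville metrics carry non-flat reducible quartic integrals, and if your blow-up argument applied to them it would prove they are flat. Third, the Lax/Riccati blow-up argument requires the characteristics to live for all time, which is guaranteed by periodicity only when $\Omega_e={\mathbb T}^2$; on a proper subdomain the real characteristics can exit $\Omega_e$ in finite time and no contradiction is obtained. Finally, even where the blow-up argument does apply, its conclusion is that $r_3,r_4$ are constant \emph{in addition to} the already-constant $r_{1,2}$, so all coefficients $a_i$ are constant and the metric is flat --- this is the \emph{flat} alternative, not a two-parameter reducible family. There is no residual family parametrized by $r_{1,2}$ to match against the Ansatz.

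The reducible alternative is produced by an entirely different mechanism that your proposal lacks. When $\mathrm{Im}\,r_{1,2}\equiv 0$ (which the maximum principle forces whenever $\Omega_e\neq{\mathbb T}^2$), the common value $r=r_{1,2}$ is a real constant, and one observes that $F_4-4rH^2$ is divisible by the \emph{real} quadratic polynomial
$$
 K=\left(p_2-\tfrac{p_1}{g}(\alpha+i\beta)\right)\left(p_2-\tfrac{p_1}{g}(\alpha-i\beta)\right)
$$
built from the complex roots of $\hat G_4$. Writing $F_4-4rH^2=KM$ and differentiating along the fibre, $L_vK\cdot M+K\cdot L_vM$ must be divisible by $K$; since $L_vK$ of a quadratic always has real roots, either $L_vK\equiv 0$ (so $K$ is proportional to $H$ and $M$ is the quadratic integral $F_2$) or $M$ is proportional to $K$ with constant factor (so $K$ itself is the quadratic integral). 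That short factorization argument is what actually exhibits $F_2$; it cannot be recovered from freezing Riemann invariants. The case split you should be making is not ``real characteristics frozen versus degenerate'' but rather ``$\mathrm{Im}\,r_{1,2}\equiv 0$ (factorization, reducible) versus $\Omega_e={\mathbb T}^2$ with $\mathrm{Im}\,r_{1,2}\neq 0$ (genuine nonlinearity plus blow-up, flat).''
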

\begin{corollary}{\it Either metric $\rho$ is flat on $\Omega_e$ or
the conformal factor $\Lambda(q_1,q_2)$ can be written on $\Omega_e$
in the form
$$
 \Lambda(q_1,q_2)=f(m_1q_1+n_1q_2)+g(m_2q_1+n_2q_2) \quad with \quad \frac{m_1}{n_1}\frac{m_2}{n_2}=-1.
$$
If in addition  $\rho$ is known to be real analytic then $\Lambda$
can be written in such a form for all $q_1,q_2$ on ${\mathbb T}^2$.}
\end{corollary}
There are several main ingredients  in the proof of these results.
The first is the property of the quasi-linear system to be Rich or
Semi-Hamiltonian. This means that it can be written in Riemann
invariants on one hand and in the form of the conservation laws on
the other hand. These facts were proved in our paper \cite{BM}.
Secondly we use strong maximum principle for Riemann invariants
corresponding to complex eigenvalues. Thirdly we were able to show
that for real eigenvalues the condition of genuine nonlinearity is
satisfied.

At the present moment we don't know how to treat the Hyperbolic
regions for the systems (2) and (1). This is mainly because the
possible lack of genuine nonlinearity in the strictly Hyperbolic
regions. We hope to use new analytic and geometric tools to overcome
this difficulty in our further developments.

\section*{Acknowledgements}
We would like to thank  S.P. Novikov, B.A. Dubrovin, O. Mokhov and M. Slemrod
for their encouraging  interest in this paper. We are
grateful to Tel Aviv University for hospitality and excellent
working conditions.

%%%%%%%%%%%%%%%%%%%%%%%%%%%%%%%%%%%%%
\section {Preparations}
We refer the reader to \cite{BM} for properties of the system
(2)used in this section. We explain first the geometric meaning of
the characteristic polynomial of the matrix $A(U)$ and also prove a
crucial Lemma about it.

The matrix of the system (2) is the following $n\times n$ () matrix
$A$:
$$
 A=  \left(
  \begin{array}{cccccc}
   0 & 0 & \dots &
  0 & 0 & a_1  \\
  a_{n-1} &
 0 & \dots & 0 & 0 & 2a_2-na_0\\0 &
 a_{n-1} & \dots & 0 & 0 & 3a_3-(n-1)a_1\\
 \dots & \dots & \dots & \dots & \dots & \dots \\
 0 &
 0 & \dots & a_{n-1} & 0 & (n-1)a_{n-1}-3a_{n-3}\\
 0 &
 0 & \dots & 0 & a_{n-1} & na_n-2a_{n-2}\\
  \end{array}\right)\eqno{(3)}
,$$ where  it is convenient to write $$ F_n=\sum_{k=0}^n
{a_{k}(t,x)}\frac{p_1^{n-k}}{g^{n-k}}p_2^{k},
$$
is the integral of degree $n$ and the unknown vector function
$U=(a_0,\dots,a_{n-2},a_{n-1})^{T}$ is a column vector of
non-constant coefficients of $F_n$, where one can show that the two
highest coefficients can be normalized to be $$a_{n-1}\equiv g \quad
and \quad a_n\equiv 1.$$

The first interesting property of this system is the fact that its
eigenvalues have very precise geometric meaning: it can be shown
that in order to compute eigenvalues of the system one just have to
find critical points of $F_n$ restricted to the circular fibre of
the energy level. In other words let us differentiate $F_n$ along
the fibre of the energy level:

Let $G_n$ be the homogeneous polynomial which is the derivative of
$F_n$ in the direction of the fibre $\{H=1/2\}\cap T_m^{*}{\mathbb
T}^2$.
$$G_n= L_v (F_n),$$
where the vector field $v$ looks differently for the coordinates (c)
and (s): In case (c)$$
 v=-p_2\frac{\partial}{\partial{p_1
 }}+{p_1}\frac{\partial}{\partial p_2},$$
and in case (s)
$$
 v=-p_2\frac{\partial}{\partial{(p_1/g)
 }}+\frac{p_1}{g}\frac{\partial}{\partial p_2}.$$

Define now usual polynomials $\hat G_n$ and $\hat F_n$ corresponding
to $G_n$ and $F_n$ of the variable $s$ in both models as follows: in
case (c):
$$s=\frac{p_2}{p_1}, \quad
 \hat G_n(s)=\frac{1}{p_1^n}G_n ,\quad \hat
 F_n(s)=\frac{1}{p_1^n}F_n,
$$ and in case (s)
$$
 s=\frac{p_2}{p_1/g}, \quad
 \hat G_n(s)=\frac{g^n}{p_1^n}G_n ,\quad \hat
 F_n(s)=\frac{g^n}{p_1^n}F_n.
$$
 It turns out that with these notations eigenvalues of the matrix $A(U)$ are
 related to the roots $s_i$ of $\hat G_n$(of the model (s)) by
$$
 \lambda_i=gs_i.
$$
Moreover it is remarkable fact that the system (2) can be written in
Riemann invariants: $$
 (r_i)_t+\lambda_i(r_i)_x=0,\  i=1,..,n,
$$
where $r_i$ are just critical values of $F_n$ on the circular
fibres. Therefore for $r_i$ one has the following
$$
 r_i=\frac{p_1^n}{g^n} \hat F_n(s_i)=\frac{\hat F_n(s_i)}{(1+s_i^2)^{n/2}}.
$$
Notice that roots of $\hat G_n$ could be $\pm i$ then the Riemann
invariant $r_i$ would have singularity. However this does not happen
by the following lemma and its corollary. Notice that both
statements are invariantly formulated but for the proof we use the
model (c).

\begin{lemma} {\it Assume $F_n$ is divisible by $H$ for a point on ${\mathbb T}^2$.
Then $F_n$ can be represented
globally as $F_n=HF_{n-2}$. Saying differently if $F_n$ is
irreducible integral then $F_n$ is not divisible by $H$ for any
point.}
\end{lemma}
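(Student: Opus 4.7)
The plan is to work in the conformal model (c), where $H = \frac{1}{2\Lambda}(p_1^2 + p_2^2) = \frac{1}{2\Lambda}(p_1+ip_2)(p_1-ip_2)$. Divisibility of $F_n$ by $H$ at a point $q \in \mathbb{T}^2$ (as a polynomial in $p_1, p_2$) is equivalent to vanishing of $\hat F_n(q;\pm i)$ at that point. Since the coefficients $a_k(q)$ are real, the conditions at $s = i$ and $s = -i$ are complex conjugates, so the relevant object is the single complex-valued function
$$
 \phi(q_1, q_2) \;:=\; F_n(q;\, p_1 = 1,\, p_2 = i).
$$
Our goal is to show that if $\phi$ vanishes at one point of $\mathbb{T}^2$ then it vanishes identically.

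Next I would exploit the integral-of-motion identity $\{H, F_n\} = 0$. Writing out the Poisson bracket in the model (c) and evaluating at the complex momentum $(p_1,p_2) = (1,i)$, the $H_{q_k}$ terms drop out because each carries a factor $p_1^2 + p_2^2$ which is zero at this point. What survives is exactly the relation
$$
 \tfrac{1}{\Lambda}\bigl(\partial_{q_1}\phi + i\,\partial_{q_2}\phi\bigr) = 0,
$$
i.e.\ $\phi$ satisfies the Cauchy--Riemann equation in the complex variable $z = q_1 + i q_2$. Viewed on the universal cover $\mathbb{R}^2 \simeq \mathbb{C}$, $\phi$ is thus an entire holomorphic function of $z$.

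Because $\phi$ is built from the $\Gamma$-periodic coefficients $a_k$, it is doubly periodic, hence bounded on $\mathbb{C}$. Liouville's theorem then forces $\phi$ to be constant, and the hypothesis that this constant is zero somewhere makes it zero everywhere. Taking complex conjugates gives the analogous statement for $s = -i$. So $\hat F_n(s)$ is divisible by $s^2 + 1$ as a polynomial in $s$ with smooth coefficients on $\mathbb{T}^2$; unwinding the substitution $s = p_2/p_1$ yields $F_n = (p_1^2+p_2^2)\,\tilde Q = H\cdot F_{n-2}$ globally, where $F_{n-2} := 2\Lambda\,\tilde Q$ is a smooth polynomial of degree $n-2$ in the momenta. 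Finally, $H\{H, F_{n-2}\} = \{H, H F_{n-2}\} = \{H, F_n\} = 0$, and since $H$ vanishes only at $p = 0$, the polynomial identity in $p$ forces $\{H, F_{n-2}\} = 0$, so $F_{n-2}$ is itself an integral.

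The main obstacle (really the one substantive step) is the Cauchy--Riemann plus Liouville argument: one must justify evaluating $F_n$ and the bracket relation at the complex locus $p = (1, \pm i)$ and recognize the resulting first-order PDE as the CR equation on the covering plane. Everything else --- polynomial divisibility and the concluding Poisson bracket cancellation --- is bookkeeping.
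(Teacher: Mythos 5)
Your argument is correct and follows essentially the same route as the paper: the paper cites Kolokoltsov's identity that the extreme coefficients $A_0,A_n$ of $F_n=\sum A_ip^{n-i}\bar p^i$ are holomorphic, hence constant on the torus, and your function $\phi=F_n(q;1,i)=2^nA_0$ is exactly that object, with your Cauchy--Riemann computation from $\{H,F_n\}=0$ at $p_1^2+p_2^2=0$ supplying the proof of the cited identity and Liouville plus double periodicity giving constancy just as in the paper. The only difference is that you make the argument self-contained (and add the routine verification that $F_{n-2}$ is itself an integral), which the paper leaves to the reference.
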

\begin{proof}  This follows in fact from two identities found by Kolokoltsov in
\cite{kol} for conformal model (c). Write $F_n$ and $H$ in a complex
way as follows $$p=p_1-ip_2, \ F_n=\sum A_ip^{n-i}\bar{p}^i,\
H=\frac{1}{2\Lambda} p\bar{p}.$$ It is shown in \cite{kol} that the
functions $A_0$ and $A_n$ are holomorphic and thus must be constant
on the torus. Therefore if $F_n$ is divisible by $H$ at some point
on the torus then $A_0,A_n$ must vanish at this point and hence
everywhere on the torus $A_0=A_n=0$ . The claim follows.
\end{proof}
\begin{corollary} {\it  For irreducible $F_n$, write the $G_n$ to be derivative of
$F_n$ with respect to the fibre, then $G_n$ is not divisible by $H$
for any point on ${\mathbb T}^2$. In other words $\pm i$ are never
among the roots of $\hat G_n$.}

\end{corollary}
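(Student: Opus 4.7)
The plan is to reduce the Corollary to the Lemma by computing the coefficients of $G_n$ in the complex conformal model (c) and showing that the extremal ones are nonzero multiples of $A_0$ and $A_n$. Working in model (c), I would first write $p = p_1 - ip_2$, $\bar p = p_1 + ip_2$, and observe how the vertical vector field $v = -p_2\partial_{p_1} + p_1 \partial_{p_2}$ acts on these: a direct computation gives $v(p) = -ip$ and $v(\bar p) = i\bar p$. Hence $v$ acts on each monomial $p^{n-k}\bar p^{k}$ as multiplication by $i(2k-n)$, so
$$
 G_n = v(F_n) = \sum_{k=0}^{n} i(2k-n)\, A_k\, p^{n-k}\bar p^{k}.
$$

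With this formula in hand, the key observation is that the coefficients of the extremal monomials $p^{n}$ and $\bar p^{n}$ in $G_n$ are $-inA_0$ and $inA_n$, respectively. Since $H = \frac{1}{2\Lambda}p\bar p$, the polynomial $G_n$ is divisible by $H$ at a point of $\mathbb T^2$ precisely when it vanishes on the null cone $\{p\bar p = 0\}$, i.e., when both $A_0$ and $A_n$ vanish at that point.

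Suppose for contradiction that $G_n$ were divisible by $H$ at some point. Then by the previous step, $A_0$ and $A_n$ vanish at that point. I would then invoke Kolokoltsov's identities (as already used in the proof of the Lemma) which say that $A_0$ and $A_n$ are globally (anti)holomorphic on the torus, hence constant, and therefore identically zero. But then $F_n$ itself has no pure $p^n$ or $\bar p^n$ term, so is divisible by $p\bar p$, and hence by $H$, at every point; the Lemma then gives $F_n = HF_{n-2}$, contradicting irreducibility of $F_n$.

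The argument is essentially a ``derivative'' version of the Lemma and the only place where something could go wrong is the step where one reads off the coefficients of $p^n$ and $\bar p^n$ in $G_n$; what makes it work is the fact that the two weights $i(2k-n)$ at $k=0$ and $k=n$ are both nonzero (equal to $\mp in$). The main obstacle, if any, is just making sure that ``divisibility by $H$'' is interpreted correctly as a pointwise statement in the fibre variables, which is exactly the same convention used in the Lemma, so the reduction to the Lemma is clean.
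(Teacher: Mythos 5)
Your proposal is correct and follows essentially the same route as the paper: pass to the complex conformal coordinates $p,\bar p$, observe that $v$ acts diagonally on the monomials $p^{n-k}\bar p^k$ with weights $i(2k-n)$, so the extremal coefficients of $G_n$ are nonzero multiples of $A_0$ and $A_n$, and then conclude via the Kolokoltsov constants and the Lemma. You have merely written out in full the steps the paper leaves implicit in its one-line argument.
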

\begin{proof}

 We shall use the coordinates (c) for the proof:
 In complex notations $p,\bar{p}$ one has
$$
  G_n=\frac{i}{2}\left(\bar{p}\frac{\partial F_n}{\partial \bar{p}}-p\frac{\partial F_n}{\partial p}\right).
$$
 If $G_n$ happened to be divisible by $p\bar{p}$ one would have as above $A_0,A_n \equiv 0$. The claim follows.
\end{proof}

\section {Maximum principle for complex Riemann invariants}

Let us recall that we consider for the cases $n=3,4$ those regions
(we shall call them "elliptic" regions) on the configuration space
where all eigenvalues are distinct and two of them are not real. In
such a region the third eigenvalue is obviously real for $n=3$. Also
for $n=4$ the other two eigenvalues must be real as well. This is
because any function on the circle must have maximum and minimum and
as we explained in the previous section eigenvalues  correspond to
critical points of $F_n$ on the fibre. It is important that the
points of maximum and minimum cannot collide up to the boundary of
elliptic region. This is because it is impossible for $F_n$ to be
constant on a fibre. Indeed if $F_n$ is constant on a fibre then the
point of intersection of such a fibre with the torus $L$ would be
necessarily a critical point for $F_n$ ($F_n$ is constant both on
$L$ and on the fibre). But this is impossible since the torus $L$ is
regular. Therefore for $n=4$ the boundary of any elliptic region
 $\partial \Omega_e$ consists of those points where the
complex conjugate roots collide and become real.

We shall use the strong maximum principle for the following
\begin{theorem}{\it Let $s_{1,2}=\alpha \pm i \beta$ be complex
conjugate roots of the polynomial $\hat G_n, n=3,4$. Denote by
$$r_{1,2}=u \pm iv=\frac{\hat F(s)}{(1+s_{1,2}^2)^{n/2}}$$ the
corresponding Riemann invariants. Then $u$ and $v$ must be constants
on $\Omega_e$. Moreover if $\Omega_e$ has a nontrivial boundary then
$v\equiv 0$ on $\Omega_e$.}
\end{theorem}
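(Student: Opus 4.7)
The plan is to combine the transport equations for the two complex-conjugate Riemann invariants into a single complex equation and exploit the ellipticity caused by $\beta \neq 0$ on $\Omega_e$. Writing $w = u + iv$ and $\lambda = \alpha + i\beta$, the two equations $(r_i)_t + \lambda_i (r_i)_x = 0$ collapse into
\begin{equation*}
w_t + \lambda w_x = 0,
\end{equation*}
which on separating real and imaginary parts gives the Cauchy--Riemann-type first-order system
\begin{equation*}
u_t + \alpha u_x - \beta v_x = 0, \qquad v_t + \alpha v_x + \beta u_x = 0,
\end{equation*}
with $\alpha,\beta$ smooth functions of the full vector $U$ (in the $n=4$ case, of $u,v$ and the two additional real Riemann invariants).

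To produce a scalar equation of definite type I would apply the conjugate transport operator $\partial_t + \bar\lambda\,\partial_x$ to the complex equation, obtaining
\begin{equation*}
w_{tt} + 2\alpha\, w_{xt} + (\alpha^2+\beta^2)\,w_{xx} + (\lambda_t + \bar\lambda\,\lambda_x)\,w_x = 0 .
\end{equation*}
The principal symbol has discriminant $-4\beta^2 < 0$, so this is uniformly elliptic on every compact subset of $\Omega_e$, with no zeroth-order term. Taking the imaginary part and eliminating $u_x$ via the first-order relation $\beta u_x = -(v_t + \alpha v_x)$ (valid since $\beta \neq 0$) yields a scalar linear elliptic equation for $v$ alone,
\begin{equation*}
v_{tt} + 2\alpha\, v_{xt} + (\alpha^2+\beta^2)\, v_{xx} + b_1 v_x + b_2 v_t = 0,
\end{equation*}
again without zeroth-order term; an analogous equation holds for $u$.

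The theorem now follows by case analysis. If $\partial\Omega_e = \emptyset$, i.e.\ $\Omega_e = \mathbb{T}^2$, then $u$ and $v$ are smooth functions on a compact manifold without boundary satisfying a linear elliptic equation with no zero-order term, so the strong maximum principle forces each of them to be constant. If $\partial\Omega_e \neq \emptyset$, then on that boundary the complex-conjugate roots $s_{1,2} = \alpha\pm i\beta$ of $\hat G_n$ collide into a real double root, so $s_1$ is real; by Corollary~1 the denominator $(1+s_1^2)^{n/2}$ does not vanish, whence $v = \mathrm{Im}\bigl[\hat F_n(s_1)/(1+s_1^2)^{n/2}\bigr] \equiv 0$ on $\partial\Omega_e$. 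The weak maximum principle applied to the scalar elliptic equation for $v$ forces $v \equiv 0$ throughout $\Omega_e$, and feeding $v_t = v_x = 0$ back into the first-order system together with $\beta \neq 0$ yields $u_x = u_t = 0$, so $u$ is constant on $\Omega_e$ as well.

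The main technical obstacle is the decoupling step: one must verify that eliminating $u_x$ via the first-order relation preserves the ellipticity of the principal part and introduces no zeroth-order term, so that the classical strong/weak maximum principle for linear second-order elliptic operators is directly applicable. The coefficients $b_1,b_2$ depend on first derivatives of $\alpha,\beta$ and hence on first derivatives of $U$, but this degree of regularity is enough for the strong maximum principle. A conceptually cleaner alternative, should the explicit elimination become unwieldy, would be to invoke a local Beltrami-type change of variables reducing the first-order system to the standard Cauchy--Riemann equations, after which $u$ and $v$ become harmonic in the new coordinates and the classical maximum principle applies without further work.
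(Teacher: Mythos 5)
Your argument for $n=4$ is essentially the paper's: both reduce the complex transport equation $w_t+\lambda w_x=0$ to a single real second-order equation for $v$ whose principal part has discriminant $-4\tilde\beta^2<0$ and which has no zeroth-order term, and then invoke the strong (resp.\ weak) maximum principle according to whether $\partial\Omega_e$ is empty or not. The only cosmetic difference is the route to that scalar equation: the paper solves the first-order system for $u_t$ and $u_x$ and cross-differentiates, obtaining a divergence-form operator
$\left(\frac{v_t}{\tilde\beta}\right)_t+\left(\frac{\tilde\alpha}{\tilde\beta}v_x\right)_t+\left(\frac{\tilde\alpha}{\tilde\beta}v_t\right)_x+\left(\frac{\tilde\alpha^2+\tilde\beta^2}{\tilde\beta}v_x\right)_x=0$, whereas you apply $\partial_t+\bar\lambda\partial_x$ and then eliminate $u_x$ via $\beta u_x=-(v_t+\alpha v_x)$, obtaining the non-divergence form. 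Both are legitimate; note only that the eigenvalue is $\lambda_{1,2}=g(\alpha\pm i\beta)$, not $\alpha\pm i\beta$, which is harmless since $g>0$.

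There is, however, one genuine gap: your treatment of $n=3$ is identical to that of $n=4$, but for $n=3$ the Riemann invariant $r=\hat F_3(s)/(1+s^2)^{3/2}$ involves the square root of a complex quantity, so $r$ --- and hence the functions $u$ and $v$ on which your entire elliptic argument is built --- need not be single-valued on $\Omega_e$. Your proof tacitly assumes $u,v$ are globally defined smooth functions there. The paper addresses exactly this point by applying the same maximum-principle argument to $r^2$ instead, which is single-valued (the denominator becomes $(1+s^2)^3$), still satisfies the same transport equation $(r^2)_t+\lambda (r^2)_x=0$, and therefore still yields constancy of $r$. You should either insert this substitution or otherwise justify a global single-valued branch of $(1+s_{1,2}^2)^{3/2}$ on $\Omega_e$; without it the $n=3$ half of the statement is not proved.
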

\begin{proof} Consider first the case $n=4$. Then there is no square
root in the denominator of $r_{1,2}$ and hence by the lemma of
previous section $u,v$ are smooth in the interior and continuous up
to the boundary of $\Omega_e$. In the interior $r=u+iv$ satisfies
the following $$r_t+(g\alpha+ig\beta)r_x=0.$$ Therefore
$$
 (u+iv)_t+(g\alpha+ig\beta)(u+iv)_x=0.
$$
Then denoting $\tilde{\alpha}=g\alpha,\tilde{\beta}=g\beta$ we have
the system:
\[\left\{
\begin{array}{l}
 u_t+\tilde{\alpha}u_x-\tilde{\beta}v_x=0\\
 v_t+\tilde{\beta}u_x+\tilde{\alpha}v_x=0\\
\end{array} \right. \]
Or equivalently,
\[\left\{
\begin{array}{l}
 u_t-\frac{\tilde{\alpha}}{\tilde{\beta}}v_t-
 \frac{\tilde{\alpha}^2+\tilde{\beta}^2}{\tilde{\beta}}v_x=0\\
 u_x+\frac{1}{\tilde{\beta}}v_t+\frac{\tilde{\alpha}}{\tilde{\beta}}v_x=0\\
\end{array} \right. \]
Eliminating $u$ one arrives to the second order equation on the
imaginary part $v$:
$$
 \left(\frac{v_t}{\tilde{\beta}}\right)_t+\left(\frac{\tilde{\alpha}}{\tilde{\beta}}v_x\right)_t+
 \left(\frac{\tilde{\alpha}}{\tilde{\beta}}v_t\right)_x+
 \left(\frac{\tilde{\alpha}^2+\tilde{\beta}^2}{\tilde{\beta}}v_x\right)_x=0.
$$
Its principal part has negative discriminant
 thus the equation is
elliptic. By the strong maximum principle $v$ cannot attain maximum
in the interior point. Therefore $v$ must be constant and moreover
to be zero if there is a non-empty boundary of the elliptic region,
because on the boundary $v$ must vanish. From the system of
equations it follows that $u$ must be a constant as well.

 For the case $n=3$, due to the square root in the formula
one might have not a single-valued function for $r$. However in this
case we shall consider $r^2$ instead, which is also a Riemann
invariant and apply the same argument as above for $r^2$. We have
that $r$ must be a constant.
\end{proof}

\section{Proof of Theorem 1}
In this section we prove Theorem 1. The proof requires the
following:
\begin{theorem} {\it Let $\Omega_e$ be region on ${\mathbb T}^2$ with the property that the polynomial
$$\hat G_3(s)=a_2s^3+(2a_1-3)s^2+(3a_0-2a_2)s-a_1$$ has one real and
two complex conjugate roots. Then it follows that this region is a
strip on the covering plane ${\mathbb R}^2(t,x)$ with the slope
$\lambda$ and on this strip the Riemannian metric admits
1-parametric group of symmetries $g(t,x)=g(\lambda t-x)$ and
therefore there exist a linear integral $F_1=p_1+\lambda p_2.$}
\end{theorem}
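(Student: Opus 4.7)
The plan is to invoke the strong maximum principle for complex Riemann invariants (the preceding theorem) and then to reduce the quasi-linear system on $\Omega_e$ to a single scalar transport equation, from which the translational symmetry of the metric will follow.

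First I would apply the previous theorem to conclude that on $\Omega_e$ the two complex conjugate Riemann invariants $r_{1,2}=u\pm iv$ are constants. (For $n=3$ the expression for $r_i$ contains a square root, so to stay in single-valued functions one works with $r^2$, which satisfies the same transport equation and is what the maximum principle was actually applied to.) Separating real and imaginary parts gives two algebraic conservation laws on $\Omega_e$, i.e. two functional relations among the three unknowns $a_0$, $a_1$, $a_2=g$. Together with Vieta's formulas for the cubic $\hat G_3(s)=a_2s^3+(2a_1-3)s^2+(3a_0-2a_2)s-a_1$ these cut out a one-parameter family, which can be parametrized by the remaining real Riemann invariant $r_3$. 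Along this family both $g$ and the third characteristic speed $\lambda_3=gs_3$ become functions of $r_3$ alone, and the third Riemann invariant equation reduces to the scalar conservation law $(r_3)_t+\lambda_3(r_3)(r_3)_x=0$.

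Since $F_3$, and hence $r_3$, is smooth on $\Omega_e$ and the solution is periodic on the torus, a Lax-type argument (in the spirit of the blow-up analysis used in \cite{B}, based on the richness/semi-Hamiltonian structure of \cite{BM}) forces either $r_3$ to be identically constant or $\lambda_3(r_3)$ to be constant in $r_3$; in either case $r_3$, and consequently $g$, becomes a function of the single linear combination $\xi=\lambda t-x$ with $\lambda$ a constant on $\Omega_e$. The dual momentum to the Killing field $\partial_t+\lambda\partial_x$ of the metric $g^2(\xi)\,dt^2+dx^2$ is then precisely the linear first integral $F_1=p_1+\lambda p_2$, and $\Omega_e$, being cut out by the discriminant of $\hat G_3$ whose coefficients are now functions of $\xi$ alone, is a union of lines $\{\xi=\text{const}\}$ and hence a strip on the covering plane. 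The main obstacle I anticipate is the explicit algebraic reduction step: performing the elimination from the two constraints $u=\text{const}$, $v=\text{const}$ to obtain a tractable parametrization of $(a_0,a_1,g)$ by $r_3$, and then verifying cleanly that the resulting $\lambda_3(r_3)$ is not pathologically degenerate, so that the two-case dichotomy really does yield the claimed rigid traveling-wave structure and not merely a weaker symmetry.
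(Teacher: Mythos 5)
Your first half matches the paper: you invoke Theorem 3 (correctly noting the $r^2$ device for $n=3$) to get $r_{1,2}$ constant on $\Omega_e$, observe that $(r_1,r_2,r_3)$ coordinatize the field variables so that $\lambda_3$ becomes a function of $r_3$ alone, and collapse the system to the scalar law $(r_3)_t+\lambda_3(r_3)(r_3)_x=0$. The divergence, and the gap, is in how you extract the travelling-wave structure from that scalar equation. You appeal to a Lax--Riccati blow-up dichotomy (``either $r_3$ is constant or $\lambda_3$ is linearly degenerate''), but that argument requires each characteristic to remain inside $\Omega_e$ for all forward or backward time: the Riccati equation for $(r_3)_x$ blows up only after a finite time that can be arbitrarily long when $(r_3)_x$ is small, so nothing is forced if the characteristic exits $\Omega_e$ first. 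When $\Omega_e\neq{\mathbb T}^2$ this trapping is precisely what must be proved, and you assume it; your closing remark that $\Omega_e$ is cut out by a discriminant depending on $\xi$ alone cannot repair this, since it presupposes the travelling-wave form the blow-up argument was supposed to deliver. Even granting global characteristics, the Riccati analysis yields only the pointwise identity $\lambda_3'(r_3)\,(r_3)_x=0$, and promoting that to your clean global dichotomy needs a further argument.

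The paper's route is more elementary and uses no blow-up at all here: along each characteristic $r_3$ is constant, hence $\lambda_3$ is constant, so characteristics are straight lines; all three Riemann invariants, hence all $\lambda_i$ and all $a_i$, are constant along such a line; since $\lambda_1\neq\lambda_2$ at interior points while $\lambda_1=\lambda_2$ on $\partial\Omega_e$, a characteristic through an interior point can never reach the boundary and is therefore an entire line contained in $\Omega_e$; two such entire lines with different slopes would meet inside $\Omega_e$ and force equal values of $r_3$, hence equal slopes, so all characteristics are parallel. This simultaneously gives that $\Omega_e$ is a strip, that $\lambda_3$ is a single constant $\lambda$, and that $g$ and all $a_i$ depend only on $x-\lambda t$, with no genuine-nonlinearity input (which the paper reserves for the $n=4$, $\Omega_e={\mathbb T}^2$ case). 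To salvage your version you must first supply the eigenvalue-collision trapping argument; once you have it, the straight-line geometry already finishes the proof and the Riccati machinery is unnecessary.
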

\begin{proof}
Firstly we have by Theorem 3 that $r_1,r_2$ are constants on
$\Omega_e.$ On the domain $\Omega_e$ $r_1,r_2,r_3$ can be taken as
coordinates in the space of field variables. $r_1, r_2$ being
constant imply that third one satisfies the equation
$(r_3)_t+\lambda_3(r_3)_x=0,$ where $\lambda_3=s_3g$. Since
$r_1,r_2$ are constant on $\Omega_e$ then $\lambda_3$ depends only
on $r_3$. We have got the simplest quasi-linear equation
$$
 (r_3)_t+\lambda_3(r_3)(r_3)_x=0.
$$
Now the characteristic of this equations are integral curves of the
vector field $$\frac{\partial}{\partial t}+\lambda
_3(r_3(t,x))\frac{\partial}{\partial x}$$ and $r_3$ must be constant
on these curves, hence $\lambda_3$ also remains constant along each
of the curves. This implies that these curves are in fact parallel
straight lines. Take any of these straight lines which passes
through interior point of $\Omega_e$. Then it can not reach the
boundary of $\Omega_e$ and must remain in $\Omega_e$. This follows
from the fact that for interior point $\lambda_1\ne\lambda_2$ and
for the boundary $\lambda_1=\lambda_2$. Moreover along every
characteristic line all $r_i, i=1,2,3$ remain constant then also
$\lambda_i$ because they are defined by coefficients $a_i$ which are
parameterized by $r_1,r_2,r_3$. So we have that $\Omega_e$ is a
strip of the slope $\lambda_3=const.$ Moreover as explained each
$a_i$ has constant values along the characteristic line. In
particular $g=g(x-\lambda_3 t).$ This proves the theorem.
\end{proof}
Now we are in position to complete the proof of main theorem for
$n=3.$

\begin{proof} (Theorem 1). By Theorem 4 $\Omega_e$ in coordinate $(t,x)$ is a strip with the slope
$\lambda$, and $a_i=a_i(x-\lambda t)$, where $a_i$ are functions of
one variable. The coefficients $a_i$ satisfy the quasi-linear system
(2). $$U_t+A(U)U_x=0, U=(a_0,a_1,a_2)^T,$$ which for $n=3$ takes the
form
$$
 A(U)=\left(
 \begin{array}{ccc}
  0&0&a_1\\
  a_2&0&2a_2-3a_0\\
  0&a_2&3-2a_1\\
 \end{array}\right).
$$
Since $U=U(x-\lambda t)$ is in the form  of the simple wave then
$U'$ is $\lambda$-eigenvector of $A(U)$. Writing this fact
explicitly one comes to the following equations:
$$
%\[\left\{
\begin{array}{l}
 a_1a_2'=\lambda a_0'\\
 a_2a_0'+2a_2a_2'-3a_0a_2'=\lambda a_1'\\
 a_2a_1'+(3-2a_1)a_2'=\lambda a_2'.\\
\end{array}\eqno{(4)}
%\right.]
$$
These differential equations can be solved as follows:

Divide the last equation of (4) by $a_2^3$ to have
$$
 \frac{a_1}{a_2^2}=\frac{3-\lambda}{2}\frac{1}{a_2^2}+c_1,
$$
and so
$$
 \quad a_1=(3-\lambda)/2+c_1a_2^2.\eqno{(5)}
$$

Divide the second equation of the system (4) by $a_2^4$ to have
$$(a_0/a_2^3)^{'}=(1/a_2^2)^{'}+\lambda a_1^{'}/a_2^4.
$$
This expression together with (5) yields
$$
 \frac{a_0}{a_2^3}=\frac{1-\lambda c_1}{a_2^2}+c_2,
$$which means that
$$
 \quad a_0=a_2(1-\lambda c_1)+c_2 a_2^3.\eqno{(6)}
$$
On the other hand substituting $a_1$ from (5) into the first
equation of the system (4) one gets
$$
  \quad \lambda a_0=\frac{c_1}{3}a_2^3+\frac{3-\lambda}{2}a_2+c_3.\eqno{(7)}
$$
Eliminating $a_0$ from the equations (6), (7) one gets certain third
power polynomial on $a_2$ which vanishes. Then there are two
possibilities: either the function $a_2$ is a constant and then the
metric is flat (remember $a_2=g$) or coefficients of this polynomial
must vanish. But this yields the identities:
$$
 c_3=0,\ c_1=\frac{3(\lambda-1)}{2\lambda^2},\ c_2=\frac{\lambda-1}{3\lambda^3}.
$$
Using them one can easily verify the following explicit identity
$$
 F_3=k_1F_1^3+2k_2HF_1,
$$
$$
 k_1=c_2=\frac{\lambda-1}{3\lambda^3},\ k_2=\frac{3-\lambda}{2\lambda}
$$
(where $F_3=\frac{a_0}{a_2^3}p_1^3+\frac{a_1}{a_2^2}p_1^2p_2+p_1p_2^2+p_2^3$).

Let us remark here that the case $\lambda=0$ means $a_i=a_i(x)$ in
particular $g=g(x)$ but then $\rho=g^2(x)dt^2+dx^2$ is obviously a
flat metric.
\end{proof}
\begin{proof} (Corollary 1).
Let us remark first that the fact that the cubic integral can be
explicitly expressed through the first power integral is absolutely
necessary for the proof. This is because the elliptic domain could
be a proper subset of the torus. In such a case it is not
immediately clear why its coefficients should be constants.
Therefore we proceed indirectly as follows.

By the previous theorem we have that the metric $\rho$ posses linear
in momenta integral $F_1$ on the set $\Omega_e$ and moreover $F_3$
can be expressed through $F_1$ and $H$. Write
$F_1=b_0(x,y)p_1+b_1(x,y)p_2.$ Using the identity
$F_3=k_1F_1^3+2k_2HF_1$ and Kolokoltsov constants for $F_3$ we get
$b_0$ and $b_1$ must be constants (obviously at least one of them is
not zero). But then the last equation of quasi-linear system of the
conformal model gives:
$$
 b_0\left(\frac{1}{\Lambda}\right)_{q_1}+b_1\left(\frac{1}{\Lambda}\right)_{q_2}=0,
$$
therefore $\Lambda=\Lambda(b_1q_1-b_0q_2).$ If we know that
$\Lambda$ is of this form on an open subset of ${\mathbb T}^2$ and
if $\Lambda$ is analytic then obviously $\Lambda$ is of this form on
the whole torus ${\mathbb T}^2$. This completes the proof.

\end{proof}

\section{Theorem 2 for the case $\Omega_e \neq {\mathbb T}^2$}
We shall split the proof of Theorem 2 in the two cases:

The first is the case when either $\Omega_e$ has a nontrivial
boundary or it coincides with the whole torus  but the Riemann
invariants $r_{1,2}$ are real at some point (and then everywhere, by
Section 3) on the torus. Our second case (in the next section), when
$\Omega_e$ coincides with the whole torus and $r_{1,2}$ are not real
everywhere on it.

\begin{theorem} {\it Let $F_4:T^*{\mathbb T}^2\rightarrow{\mathbb R}$ be a polynomial of degree 4
such that $\{F_4,H\}=0$. Denote by $\Omega_e$ a domain on ${\mathbb
T}^2$ where polynomial $\hat G_4$ has two complex-conjugate and two
real distinct roots. Then if $\Omega_e\ne {\mathbb T}^2$  or
$\Omega_e= {\mathbb T}^2$ but one knows that ${\rm Im} r_{1,2}\equiv
0$ then $F_4$ can be expressed on $\Omega_e$ as follows
$$
 F_4=k_1F_2^2+2k_2HF_2+4k_3H^2
$$
where $F_2$ is a polynomial of degree 2 which is an integral of the geodesic flow.}
\end{theorem}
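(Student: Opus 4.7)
The plan is to exploit Theorem 3 algebraically: the constancy, and in fact the reality, of the complex--conjugate Riemann invariants $r_{1,2}$ should translate directly into an algebraic factorization of the polynomial $\hat F_4(s)$. Re--homogenizing this identity in the momenta will then exhibit $F_4$ as a quadratic polynomial in $H$ and in a (to be constructed) quadratic integral $F_2$.

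\textbf{Step 1 (reduction via Theorem 3).} Under either hypothesis, Theorem 3 forces $r_1 = r_2 \equiv r_\ast$ with $r_\ast$ a real constant on $\Omega_e$: in the case $\Omega_e \ne \mathbb{T}^2$ the boundary argument gives $v \equiv 0$, while in the second case $\operatorname{Im} r_{1,2} \equiv 0$ is given, so again $v \equiv 0$ and $u \equiv \mathrm{const}$. Denote the complex--conjugate roots of $\hat G_4$ by $s_{1,2} = \alpha(t,x) \pm i \beta(t,x)$ with $\beta>0$; these are smooth on $\Omega_e$ because all four roots of $\hat G_4$ are simple there.

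\textbf{Step 2 (algebraic factorization).} Because $r_i = \hat F_4(s_i)/(1+s_i^2)^2$ is the critical value of $f(s) := \hat F_4(s)/(1+s^2)^2$ at the critical point $s_i$, combining $f(s_i)=r_\ast$ with $f'(s_i)=0$ and the quotient rule shows that $s_1,s_2$ are in fact \emph{double} roots of
\[ P(s) := \hat F_4(s) - r_\ast(1+s^2)^2. \]
Since $\deg P \le 4$ and the leading coefficient of $\hat F_4$ is $a_4 \equiv 1$, the polynomial $P$ must equal
\[ P(s) = (1-r_\ast)\bigl(s^2 - 2\alpha s + \alpha^2 + \beta^2\bigr)^2. \]
Multiplying by $(p_1/g)^4$ and using $(1+s^2)(p_1/g)^2 = 2H$ and $\hat F_4(s)(p_1/g)^4 = F_4$ converts this into the identity, valid on $\Omega_e$,
\[ F_4 = 4 r_\ast H^2 + (1-r_\ast)\,\Phi^2, \qquad \Phi := (p_2 - \alpha p_1/g)^2 + \beta^2 (p_1/g)^2, \]
where $\Phi$ is a homogeneous positive--definite quadratic form in the momenta with smooth coefficients on $\Omega_e$.

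\textbf{Step 3 (extracting the integral $F_2$).} Since $F_4$ and $H$ are integrals of the geodesic flow, so is $(1-r_\ast)\Phi^2$. If $r_\ast = 1$, the identity degenerates to $F_4 = 4H^2$ and we take $F_2 := 2H$, $k_1=k_2=0$, $k_3=1$. Otherwise $\Phi^2$ is a nontrivial integral; because $\Phi > 0$ off the zero section (here $\beta\ne 0$ is crucial), $\Phi$ is the well--defined smooth positive square root of $\Phi^2$, and from $\{H,\Phi^2\}=2\Phi\{H,\Phi\}=0$ we deduce $\{H,\Phi\}=0$. Setting $F_2:=\Phi$, $k_1:=1-r_\ast$, $k_2:=0$, $k_3:=r_\ast$ yields the claimed identity $F_4 = k_1 F_2^2 + 2 k_2 H F_2 + 4 k_3 H^2$.

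\textbf{Main obstacle.} The essential point, on which everything turns, is the doubled factor in Step 2: one must verify that $f(s_i) = r_\ast$ together with criticality $f'(s_i)=0$ upgrades $s_1,s_2$ from simple zeros to \emph{double} zeros of $P(s)$, for it is this doubling that produces $\Phi^2$ rather than just $\Phi$ and thereby makes possible the extraction of a quadratic integral. The positive--definiteness of $\Phi$, a direct consequence of being strictly inside the elliptic region $\beta\ne 0$, is what legitimizes the square--root step and secures that $\Phi$---and not merely $\Phi^2$---is a Poisson--commuting degree--2 integral.
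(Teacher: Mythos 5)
Your proof is correct, but the central factorization step is obtained by a genuinely different argument than the paper's. The paper only uses that $s_{1,2}$ are \emph{simple} roots of $\hat F_4(s)-r(1+s^2)^2$ (coming from $f(s_i)=r$ alone), which yields the weaker factorization $F_4-4rH^2=KM$ with $K=(p_2-\tfrac{p_1}{g}s_1)(p_2-\tfrac{p_1}{g}s_2)$ and $M$ an undetermined real quadratic; it then feeds this into the flow-invariance via $G_4=L_v(F_4)=(L_vK)M+K(L_vM)$, which must be divisible by $K$ because $s_{1,2}$ are roots of $\hat G_4$, and runs a two-case analysis (either $K$ and $M$ are coprime, forcing $L_vK\equiv 0$ and $K\propto H$, hence $F_4-4rH^2=HM$; or $M\propto K$, hence $F_4=4rH^2+(1-r)K^2$). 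You instead exploit criticality from the start: $f(s_i)=r_\ast$ \emph{and} $f'(s_i)=0$ (the latter because $s_i$ is a root of $\hat G_4$ and $1+s_i^2\neq 0$ by Corollary 2) make $s_{1,2}$ \emph{double} roots of $P$, so you land directly in the paper's second case with no case analysis and with the Lie-derivative divisibility argument bypassed entirely. What your route buys is brevity and an explicit extraction of $F_2$ from $F_2^2$ (positive-definiteness of $\Phi$ for $\beta\neq 0$, then $\{H,\Phi^2\}=2\Phi\{H,\Phi\}=0$ off the zero section); what the paper's route buys is that it formally covers the degenerate alternative $K\propto H$ (i.e.\ $s_{1,2}=\pm i$, $H\mid F_4$), which your use of $f$ and of the finite Riemann invariants $r_{1,2}$ tacitly excludes --- though this exclusion is exactly what Lemma 1 and Corollary 2 provide under the standing irreducibility assumption that also makes the hypothesis ${\rm Im}\,r_{1,2}\equiv 0$ meaningful. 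Two minor remarks: your degenerate subcase $r_\ast=1$ in fact cannot occur, since $P\equiv 0$ would force $\hat G_4\equiv 0$, contradicting the four distinct roots; and you should note that constancy of $r_\ast$ is not needed for the pointwise factorization, only its reality --- constancy matters later for the corollaries.
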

\begin{proof}
Denote by $s_{1,2}=\alpha\pm i\beta$ the pair of complex-conjugate
roots of $\hat G_4$, and by $r_{1,2}=u\pm iv$ the corresponding
Riemann invariants. It follows from the Theorem 3 that $v\equiv 0$
and $u$ is a constant on $\Omega_e.$ So we have that $r_{1,2}$ is a
real constant. Denote it by $r$. Then it follows that $F_4-4rH^2$
vanish for $p_2:(p_1/g)=\alpha\pm i\beta$ and thus can be
factorized:
$$
 F_4-4rH^2=KM,\
 K=\left(p_2-\frac{p_1}{g}(\alpha+i\beta)\right)\left(p_2-\frac{p_1}{g}(\alpha-i\beta)\right),
$$
where $M$ is a real polynomial of degree 2.

Moreover, since $(\alpha\pm i\beta)$ are roots of $\hat G_4$  then
$$
 G_4=L_vF=(L_vK)M+K(L_vM)
$$
must be divisible by $K$.
Therefore there are two possibilities:

1. $M$ and $K$ are relatively prime. In this case $L_vK$ must be
divisible by $K$. Notice that for any quadratic polynomial $K$,
$L_vK$ always have real roots. This is because any function on the
circle must have minimum and maximum. Thus the only possibility in
the first case is $L_vK\equiv 0$. But this means $K$ is proportional
to $H$. One may assume  $K=H$ (by taking the coefficient of
proportionality to $M$). So we have got $F_4-4rH^2=HM$, therefore
$M$ must be integral of the geodesic flow degree 2.

2. In this case $M$ is divisible by $K$, i.e. proportional to $K$,
$M=cK$ (where $c(t,x)$ is a function). So we have $F_4-4rH^2=cK^2.$
The function $c$ can be easily computed by equating the coefficients
of $(p_2^4)$ at both sides: $1-r=c.$ Indeed, we have in
Semi-geodesic coordinates $
H=\frac{1}{2}\left(\frac{p_1^2}{g^2}+p_2^2\right)$ and $F_4$ has
coefficient $1$ at $(p_2^4)$.

Thus $c=1-r$ is a constant and we have
$$
 F_4=4rH^2+(1-r)K^2.
$$
Again $K$ must be an integral of degree 2. So in both cases we
proved that $F_4$ is reducible.
This proves Theorem 5.
\end{proof}
By the same method we can prove the following

\begin{corollary}{\it Under the conditions of
Theorem 5 the conformal factor $\Lambda(q_1,q_2)$ can be written on
$\Omega_e$ in the form
$$
 \Lambda(q_1,q_2)=f_1(m_1q_1+n_1q_2)+f_2(m_2q_1+n_2q_2) \quad with \quad m_1m_2/n_1n_2=-1.
$$
Moreover, if $\Lambda$ is known to be real analytic then $\Lambda$
can be written in such a form for all $(q_1,q_2)$ on ${\mathbb
T}^2$.}
\end{corollary}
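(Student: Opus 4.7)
The plan is to follow the template of Corollary 1's proof, substituting the quadratic integral $F_2$ delivered by Theorem 5 for the linear integral $F_1$ that was used there. Fix a connected component of $\Omega_e$; by Theorem 5 we have a quadratic polynomial $F_2$, an integral of the geodesic flow on that component, related to $F_4$ by the identity
\[F_4 = k_1 F_2^2 + 2k_2 H F_2 + 4k_3 H^2.\]
The strategy is to first force the leading complex coefficients of $F_2$ to be constants on $\Omega_e$, and then extract the Liouville form of $\Lambda$ from $\{F_2,H\}=0$ by a short PDE computation.

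Write $F_2 = B_0 p^2 + 2B_1 p\bar p + B_2 \bar p^2$ in the complex notation $p = p_1 - ip_2$, with $B_2 = \overline{B_0}$ and $B_1$ real. I aim first to show that $B_0$ is a complex constant on $\Omega_e$. If $k_1 \neq 0$, then since $H = p\bar p/(2\Lambda)$ contributes neither a $p^4$ nor a $\bar p^4$ term, the $p^4$-coefficient of the identity reads $A_0(F_4) = k_1 B_0^2$; by Kolokoltsov's identity (as used in Lemma 1) applied to the globally defined integral $F_4$, $A_0(F_4)$ is a constant on $\mathbb T^2$, whence $B_0$ is a constant on $\Omega_e$. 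If $k_1 = 0$, the identity rearranges to $F_4 - 4k_3 H^2 = 2k_2 H F_2$ on $\Omega_e$, so the globally defined integral $F_4 - 4k_3 H^2$ is divisible by $H$ at an interior point; Lemma 1 extends this divisibility to all of $\mathbb T^2$, producing a global quadratic integral $M$ that agrees with $F_2$ on $\Omega_e$, to which Kolokoltsov applies directly.

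Next, with $B_0 = a_0 + ib_0$ a complex constant (generic case $a_0 \neq 0$), rotate the conformal coordinates by the angle $\theta = \tfrac{1}{2}\arg B_0$ to obtain new orthogonal coordinates $(\tilde q_1, \tilde q_2)$ in which $B_0$ is real. In the rotated frame, $F_2 = A(\tilde q)\, p_1'^2 + C(\tilde q)\, p_2'^2$ with no cross term and with $A - C \equiv 4a_0$ a nonzero constant. Expanding $\{F_2,H\}=0$ with $H = (p_1'^2 + p_2'^2)/(2\Lambda)$ and equating the coefficients of $p_1'^3$ and $p_2'^3$ gives $(A\Lambda)_{\tilde q_1} = 0$ and $(C\Lambda)_{\tilde q_2} = 0$, so $A\Lambda = f(\tilde q_2)$ and $C\Lambda = h(\tilde q_1)$. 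Subtracting and using that $A - C$ is a nonzero constant, $\Lambda = (f(\tilde q_2) - h(\tilde q_1))/(A-C)$, which is the Liouville form $\Lambda(\tilde q_1, \tilde q_2) = f_1(\tilde q_1) + f_2(\tilde q_2)$. Undoing the orthogonal rotation expresses $\Lambda$ as $f_1(m_1 q_1 + n_1 q_2) + f_2(m_2 q_1 + n_2 q_2)$ with $(m_1,n_1) = (\cos\theta, \sin\theta)$ and $(m_2,n_2) = (-\sin\theta, \cos\theta)$; the orthogonality $m_1 m_2 + n_1 n_2 = 0$ is precisely the condition $m_1 m_2/(n_1 n_2) = -1$.

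Finally, for the analytic statement: if $\Lambda$ is real analytic on $\mathbb T^2$ and coincides with such a Liouville sum on the open set $\Omega_e$, analytic continuation propagates the identity to all of $\mathbb T^2$. The main obstacle I anticipate is the degenerate subcase $a_0 = 0$ (equivalently $A \equiv C$), where the subtraction $\Lambda = (f-h)/(A-C)$ collapses and $F_2$ reduces to a multiple of $H$ plus a pure cross term; separately disposing of this case, by showing either that the metric is forced to be flat or that $F_2$ can be modified by a constant combination with $H$ to render the Liouville form trivial, is where I would expect to spend the most care.
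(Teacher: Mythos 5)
Your proposal is correct and follows the same skeleton as the paper's proof (write $F_2$ in the complex form $b_0p^2+b_1p\bar p+b_2\bar p^2$, use Kolokoltsov's holomorphicity to force the extreme coefficients to be constant, then extract the Liouville form of $\Lambda$ from $\{F_2,H\}=0$), but the two halves differ in useful ways. On the first half you supply detail the paper omits: the split into $k_1\neq 0$ (read off the $p^4$-coefficient $A_0(F_4)=k_1b_0^2$ of the globally defined $F_4$) and $k_1=0$ (apply Lemma 1 to $F_4-4k_3H^2$ to globalize the divisibility by $H$ and hence the quadratic integral) is exactly the kind of justification hiding behind the paper's phrase ``It follows from Theorem 5 and Kolokoltsov identities.'' On the second half you diverge: the paper keeps the original conformal coordinates, writes the two first-order equations for $b_1\Lambda$, cross-differentiates to eliminate it, and lands on the constant-coefficient hyperbolic equation $B\Lambda_{q_1q_1}-2A\Lambda_{q_1q_2}-B\Lambda_{q_2q_2}=0$, reading the two directions off its characteristics (whose slopes multiply to $-1$ by Vi\`ete); you instead rotate by $\tfrac12\arg b_0$ to kill the cross term and integrate the two first-order relations $(A\Lambda)_{\tilde q_1}=0$, $(C\Lambda)_{\tilde q_2}=0$ directly. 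Your route is more elementary (no second-order equation, no appeal to the general solution of a wave equation) at the cost of a coordinate change; both are sound.

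One correction to the worry you flag at the end. The rotation by $\tfrac12\arg b_0$ handles \emph{any} nonzero $b_0$, including purely imaginary $b_0$; after rotating, $A-C=4|b_0|$, so the subtraction only collapses when $b_0=0$ outright, not merely when $\mathrm{Re}\,b_0=0$. And the case $b_0=b_2=0$ cannot occur under the hypotheses of Theorem 5: then $F_2=b_1p\bar p=2b_1\Lambda H$, and $\{F_2,H\}=0$ forces $b_1\Lambda$ to be constant, so $F_2$ is a constant multiple of $H$ and $F_4$ is a polynomial in $H$ alone; but then $G_4=L_vF_4\equiv 0$, contradicting the assumption that $\hat G_4$ has four distinct roots (equivalently, contradicting Corollary 3). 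With that observation the ``obstacle'' disappears and your argument is complete.
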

\begin{proof} We bypass the difficulty exactly as in the previous
corollary. It follows from Theorem 5 that the integral $F_4$ is
reducible on $\Omega_e$, so that there exists an integral $F_2$ on
the domain $\Omega_e$, such that $F_4$ can be expressed as function
of $H$ and $F_2$. Then write $F_2$ in conformal model (c) in the
complex form
 $$F_2=b_0p^2+b_1p\bar{p}+b_2\bar{p}^2.$$
Using the explicit expression
$$
 F_4=k_1F_2^2+2k_2HF_2+4k_3H^2
$$
and Kolokoltsov identities for $F_4$ one can conclude that $b_0,b_2$
must be constants on $\Omega_e$, $b_0=A+iB,b_2=A-iB$ and $b_1$ is a
real function. Then the equations on $b_1$ and $\Lambda$ are the
following:
$$
 (b_1\Lambda)_{q_1}=-2A\Lambda_{q_1}-2B\Lambda_{q_2},\
 (b_1\Lambda)_{q_2}=-2B\Lambda_{q_1}+2A\Lambda_{q_2}.
$$
Eliminating $(b_1\Lambda)$ we get
$$
 B\Lambda_{q_1q_1}-2A\Lambda_{q_1q_2}-B\Lambda_{q_2q_2}=0
$$
It follows from this hyperbolic equation that
$$
 \Lambda(q_1,q_2)=f_1(m_1q_1+n_1q_2)+f_2(m_2q_1+n_2q_2) \quad with \quad m_1m_2/n_1n_2=-1.
$$
which proves the claim.
\end{proof}
\section{Theorem 2 for the case $\Omega_e ={\mathbb T}^2$ and $r_{1,2}$ are not real}
This is the most interesting case which is left. In this case we
shall prove that each one of the real eigenvalues is genuinely
nonlinear in the sense of Lax. This fact, together with the property
of our system to be Rich (or Semi-Hamiltonian), will enable us to
establish blow-up of the derivative $r_x$ unless it vanishes. Such a
proof was first given in \cite{B} for other system.
\begin{theorem} {\it Assume $\Omega_e={\mathbb T}^2,$ and  assume
that for all $(t,x)$ the polynomial $\hat G_4$ has 4 distinct roots,
$2$-complex conjugate $s_{1,2}=\alpha\pm i\beta$ and 2 real
$s_{3,4}$. Assume also that the imaginary part of Riemann invariants
$r_{1,2}$ does not vanish. Then the real eigenvalues
$\lambda_{3,4}=gs_{3,4}$ are necessarily genuinely non-linear and
therefore the corresponding Riemann invariants are constants. In
particular all $a_i$ must be constant, and so the metric is flat.}
\end{theorem}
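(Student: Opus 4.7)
The plan is to follow the Lax blow-up technique adapted to Rich systems in \cite{B}, with the reduction furnished by Theorem~3 playing the role of a preparatory simplification. Since $\Omega_e = \mathbb{T}^2$ has no boundary, the strong maximum principle in Theorem~3 still forces $u = \mathrm{Re}\, r_1$ and $v = \mathrm{Im}\, r_1$ to be constants on the whole torus; the assumption of the theorem is precisely that $v \not\equiv 0$. Consequently the solution in the 4-dimensional field space of Riemann invariants is confined to the 2-dimensional level set $\{r_1 = \bar r_2 = \text{const}\}$, and the full $4 \times 4$ Riemann system collapses to an effective $2\times 2$ system
\[
    (r_3)_t + \lambda_3(r_3,r_4)(r_3)_x = 0,\qquad (r_4)_t + \lambda_4(r_3,r_4)(r_4)_x = 0,
\]
which by \cite{BM} inherits the Semi-Hamiltonian/Rich property and is strictly hyperbolic because $s_3 \neq s_4$ are both real.

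The technical heart of the proof is then to verify the Lax genuine-nonlinearity conditions
\[
    \frac{\partial \lambda_i}{\partial r_i}\neq 0,\qquad i=3,4,
\]
along this 2-dimensional submanifold. Using $\lambda_i = g s_i$, the relation $r_i = \hat F_4(s_i)/(1+s_i^2)^2$, and the algebraic dependence of the roots of $\hat G_4$ on the coefficients $a_0,\ldots,a_3$ (with $a_3 = g$), one can write $\partial \lambda_i/\partial r_i$ as an explicit rational function of the coefficients. I would then argue that its vanishing on an open set would force an algebraic degeneracy among the four roots that contradicts the standing hypotheses, namely that $s_{1,2}$ are non-real while $s_{3,4}$ are real and all four are distinct. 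This is the step I expect to be the main obstacle; it is where the quartic case genuinely differs from the cubic case, and where one must exploit the particular form of the matrix $A(U)$ in (3), together with the constraint that $r_{1,2}$ are the real constants fixed in the first step.

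Granting genuine nonlinearity, the Lax blow-up argument from \cite{B} carries over. Using the semi-Hamiltonian commutation identities of \cite{BM}, one constructs an integrating factor that converts the evolution of $R_i := (r_i)_x$ along the $i$-th characteristic into a Riccati-type ODE
\[
    \frac{d R_i}{d\tau} + c_i(r_3,r_4)\, R_i^2 = 0,
\]
with $c_i$ proportional to $\partial \lambda_i/\partial r_i$ and therefore nowhere zero. Since the coefficient functions $a_k(t,x)$ are smooth and doubly periodic on the compact torus $\mathbb{T}^2$, the quantity $R_i$ is globally bounded; were it to take a nonzero value at any single point, the Riccati ODE would produce a finite-time blow-up along the corresponding characteristic, contradicting smoothness of $F_4$. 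Hence $R_i \equiv 0$, so $r_i$ depends on $t$ alone, and the Riemann equation $(r_i)_t = 0$ forces $r_i$ to be a genuine constant on $\mathbb{T}^2$.

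Once all four Riemann invariants are constants, all the coefficients $a_0,\ldots,a_3$ are constants as well; in particular $g = a_3$ is constant, and the metric $g^2\,dt^2 + dx^2$ is flat. This yields the conclusion of the theorem.
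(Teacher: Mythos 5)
Your overall architecture coincides with the paper's: Theorem~3 makes $r_{1,2}$ constant, the system reduces to the two real Riemann invariants, genuine nonlinearity of $\lambda_{3,4}$ is established, and the Riccati/blow-up argument for Rich systems from \cite{B} forces $(r_3)_x=(r_4)_x\equiv 0$. The blow-up half of your argument is sound and matches the paper. The problem is that you have left the technical heart --- the verification that $\partial\lambda_i/\partial r_i\neq 0$ --- as a declared intention rather than a proof, and the mechanism you propose for it is wrong. You say that vanishing of $\partial\lambda_i/\partial r_i$ ``would force an algebraic degeneracy among the four roots that contradicts the standing hypotheses, namely that $s_{1,2}$ are non-real while $s_{3,4}$ are real and all four are distinct.'' That is not true: the paper's Lemma~2 shows that $\partial\lambda_3/\partial r_3$ vanishes exactly when $\hat G_4$ shares a root with the quadratic $\gamma=(a_3+a_1)s^2+4a_0s-(a_1+a_3)$, and this can perfectly well happen while $\hat G_4$ still has two non-real and two distinct real roots. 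No contradiction with the root configuration is available.

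The actual obstruction, and the reason the theorem carries the hypothesis ${\rm Im}\,r_{1,2}\not\equiv 0$, is different: in the degenerate case where $\hat G_4$ is divisible by $\gamma$ (Case~2 of the paper's Lemma~3), Viete's formulas express $a_0/a_3$, $a_1/a_3$, $a_2/a_3$ through $\alpha,\beta$, and a direct computation then shows that ${\rm Im}\bigl(\hat F_4(\alpha+i\beta)/(1+(\alpha+i\beta)^2)^2\bigr)=0$ identically, i.e.\ the complex Riemann invariant is forced to be real --- which is precisely what the hypothesis excludes. (Several further exceptional cases, e.g.\ $a_0=0$ or $\gamma\equiv 0$, also need separate elementary treatment.) Your sketch never touches the hypothesis ${\rm Im}\,r_{1,2}\neq 0$ at the genuine-nonlinearity step, even though it is indispensable there; indeed the complementary case ${\rm Im}\,r_{1,2}\equiv 0$ is handled in the paper by an entirely different factorization argument (Theorem~5), because genuine nonlinearity cannot be guaranteed there. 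As written, your proof has a gap at its central step, and the route you indicate for closing it would not succeed.
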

We proceed as follows. Let us first subtract from $F_4-4H^2$ in
order to make the coefficient of $p_2^4$ vanish, all the other $a_i$
we shall denote by the same letters.
\begin{lemma} {\it Let $\lambda_3=gs_3$ be an eigenvalue of our
quasi-linear system then the derivative
$\frac{\partial \lambda_3}{\partial r_3}$ can be computed
$$
 \frac{\partial \lambda_3}{\partial r_3}=3\frac{(1+s_3^2)^2}{S\hat{G}'_4(s_3)}((a_3+a_1)s_3^2+4a_0s_3-(a_1+a_3))
$$
where
$$
 S=(s_3-s_1)(s_3-s_2)(s_3-s_4),
$$
$$
 \hat{G_4}(s)=a_3s^4+2a_2s^3+3(a_1-a_3)s^2+(4a_0-2a_2)s-a_1.
$$}
\end{lemma}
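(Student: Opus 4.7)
The plan is to perform implicit differentiation in two coordinate systems on the field-variable space --- the natural coefficients $(a_0,a_1,a_2,a_3)$ and the Riemann invariants $(r_1,r_2,r_3,r_4)$ --- and assemble the answer via the chain rule. Since $\lambda_3=gs_3=a_3 s_3$ (using $g=a_{n-1}=a_3$, which is unchanged by the preliminary subtraction of $4H^2$) and $s_3$ depends on $U$ only through the implicit equation $\hat G_4(s_3;U)=0$, one obtains $\partial s_3/\partial a_l=-Q_l(s_3)/\hat G_4'(s_3)$ with $Q_l(s):=\partial\hat G_4/\partial a_l$, hence
\[
\frac{\partial\lambda_3}{\partial a_l}\;=\;s_3\,\delta_{l,3}-\frac{Q_l(s_3)}{S},
\]
using $\hat G_4'(s_3)=a_3S$.

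To switch coordinates I would first observe that the Jacobian $\partial r/\partial a$ has the particularly clean form $\partial r_i/\partial a_l=s_i^l/(1+s_i^2)^2$: indeed $s_i$ is a critical point of $s\mapsto\hat F_4(s)/(1+s^2)^2$, so its implicit $s_i$-dependence contributes nothing, and $\hat F_4$ depends on $a_l$ only through the monomial $s^l$. Inverting this Jacobian is a Lagrange-interpolation problem at the four nodes $s_1,\dots,s_4$: the column $X=\partial U/\partial r_3$ is the coefficient vector of the unique cubic $P$ with $P(s_i)=(1+s_i^2)^2\delta_{i,3}$, namely
\[
P(s)=\frac{(1+s_3^2)^2}{S}\,(s-s_1)(s-s_2)(s-s_4).
\]

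The chain rule then gives $\partial\lambda_3/\partial r_3=s_3X_3-(1/S)\sum_l Q_l(s_3)X_l$. To evaluate the last sum I would exploit the operator identity $\hat G_4(s)=4s\hat F_4(s)-(1+s^2)\hat F_4'(s)$ which follows from $G_n=L_v F_n$ as in Section 2: the $Q_l$ are precisely the images of the monomials $s^l$ under this operator, so by linearity $\sum_l X_l Q_l(s)=4sP(s)-(1+s^2)P'(s)$. Plugging in $P(s_3)=(1+s_3^2)^2$ together with $\tilde P(s):=(s-s_1)(s-s_2)(s-s_4)=\hat G_4(s)/[a_3(s-s_3)]$, whose derivative at $s_3$ equals $\hat G_4''(s_3)/(2a_3)$ by L'H\^opital, converts everything into an explicit polynomial expression in $s_3$ and the $a_l$.

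The main obstacle is the final algebraic cleanup. The numerator that emerges is a priori a degree-$4$ polynomial in $s_3$, and the claim is that upon using $\hat G_4(s_3)=0$ to eliminate $s_3^4$ a substantial cancellation occurs so that most of the monomials in $s_3^0,\dots,s_3^3$ also disappear, leaving precisely $(a_3+a_1)s_3^2+4a_0s_3-(a_3+a_1)$ multiplied by the stated prefactor $-(1+s_3^2)^2/[S\hat G_4'(s_3)]$. I would organise the computation by powers of $s_3$ and track the cancellations carefully; they reflect the particular sparsity of the last row $(0,0,a_3,-2a_2)$ of $A$ produced by the preliminary subtraction of $4H^2$ that sets $a_4=0$, which is what keeps the final bracket a quadratic in $s_3$ rather than a full quartic.
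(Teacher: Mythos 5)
Your proposal is correct and follows essentially the same route as the paper's proof: the chain rule $\partial\lambda_3/\partial r_3=s_3\,\partial a_3/\partial r_3+a_3\sum_l(\partial s_3/\partial a_l)(\partial a_l/\partial r_3)$, implicit differentiation of $\hat G_4(s_3)=0$, and inversion of the Jacobian $\partial r_i/\partial a_l=s_i^l/(1+s_i^2)^2$ — your Lagrange-interpolation polynomial $P$ reproduces exactly the paper's explicit entries $\partial a_l/\partial r_3$, and your operator identity $\hat G_4=4s\hat F_4-(1+s^2)\hat F_4'$ is just a cleaner bookkeeping of the paper's Vieta substitutions. One caveat worth recording: carrying your scheme to the end gives $\partial\lambda_3/\partial r_3=-3s_3\hat G_4'(s_3)\cdot\frac{(1+s_3^2)^2}{a_3S^2}+\frac{(1+s_3^2)^3\hat G_4''(s_3)}{2a_3S^2}=+\frac{3(1+s_3^2)^2}{S\hat G_4'(s_3)}\bigl((a_1+a_3)s_3^2+4a_0s_3-(a_1+a_3)\bigr)$, i.e.\ the stated formula is off by the factor $-3$; this discrepancy sits in the paper itself (its displayed identity $2\hat G_4-(2a_3s_3^4+\cdots)$ actually equals $3\gamma$, not $\gamma$) and is immaterial, since Lemma 3 only uses the non-vanishing of the quadratic bracket.
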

\begin{proof}
We have
$$
 \frac{\partial\lambda_3}{\partial r_3}=\frac{\partial (s_3a_3)}{\partial r_3}=s_3\frac{\partial a_3}{\partial r_3}+
 a_3\left(\sum_{i=0}^{3}\frac{\partial s_3}{\partial a_i}\frac{\partial a_i}{\partial r_3}\right).
$$
Since $s_3$ is a root of $\hat{G_4}=0$
then
$$
 \frac{\partial s_3}{\partial a_0}=-\frac{4s_3}{\hat{G}_4'(s_3)},\ \frac{\partial s_3}{\partial a_1}=\frac{1-3s_3^2}{\hat{G}_4'(s_3)},\
$$
$$
 \frac{\partial s_3}{\partial a_2}=\frac{2s_3(1-s_3^2)}{\hat{G}_4'(s_3)},\ \frac{\partial s_3}{\partial a_3}=\frac{s_3^2(s_3^2-3)}{\hat{G}_4'(s_3)}.
$$
In order to find $\frac{\partial\lambda_3}{\partial r_3}$ we need to
calculate $\frac{\partial a_i}{\partial r_3}:$
$$
 \left(\frac{\partial a_i}{\partial r_j}\right)=\left(\frac{\partial F(s_i)}{\partial a_j}\right)^{-1}.
$$
From the previous identity we obtain
$$
 \frac{\partial a_0}{\partial r_3}=-\frac{(1+s_3^2)^2s_1s_2s_4}{S},\ \frac{\partial a_1}{\partial r_3}=\frac{(1+s_3^2)^2(s_1s_2+s_1s_4+s_2s_4)}{S},
$$
$$
 \frac{\partial a_2}{\partial r_3}=-\frac{(1+s_3^2)^2(s_1+s_2+s_4)}{S},\ \frac{\partial a_3}{\partial r_3}=\frac{(1+s_3^2)^2}{S}.
$$
 The fact that
$s_i,i=1,\dots,4$ are roots of $\hat{G_4}$ gives us
$$
 s_1+s_2+s_4=-\frac{2a_2}{a_3}-s_3,
$$
$$
 s_1s_2+s_1s_4+s_2s_4=\frac{3(a_1-a_3)}{a_3}-s_3(s_1+s_2+s_4)=$$
$$
 \frac{3(a_1-a_3)}{a_3}+s_3\left(\frac{2a_2}{a_3}+s_3\right),
$$
$$
 s_1s_2s_4=-\frac{a_1}{a_3s_3},
$$
and
$$
 \frac{\partial a_0}{\partial r_3}=\frac{(1+s_3^2)^2a_1}{a_3s_3S},\ \frac{\partial a_1}{\partial r_3}=\frac{(1+s_3^2)^2(3(a_1-a_3)+2a_2s_3+a_3s_3^2)}{a_3S},
$$
$$
 \frac{\partial a_2}{\partial r_3}=\frac{(1+s_3^2)^2(2a_2+a_3s_3)}{a_3S}.
$$
From here we have
$$
 \frac{\partial\lambda_3}{\partial r_3}=-\frac{(1+s_3^2)^2}{S\hat{G}_4'(s_3)}(2a_3s_3^4+4a_2s_3^3+3(a_1-3a_3)s_3^2-4(a_0+a_2)s_3+a_1+3a_3).
$$
Using again that $\hat{G}_4(s_3)=0$ we have
$$
(2a_3s_3^4+4a_2s_3^3+3(a_1-3a_3)s_3^2-4(a_0+a_2)s_3+a_1+3a_3)=$$
$$
 -3((a_3+a_1)s_3^2+4a_0s_3-(a_1+a_3)).
$$
This proves Lemma 2.
\end{proof}
Genuine nonlinearity is proved in the following lemma which is a
general fact about critical values of the polynomials on the circle.
\begin{lemma}
 {\it Let $F_4$ be a polynomial of degree 4,
$F_4=\sum_{i=1}^{3}a_i\left(\frac{p_1}{g}\right)^{n-i}p_2^i$
considered on the circle
$H=\frac{1}{2}\left(\left(\frac{p_1}{g}\right)^2+p_2^2\right)=\frac{1}{2}.$
Denote by $G_4$ the derivative of $F_4$. Let $s_i$ be the roots of
$\hat G_4$ such that $s_{1,2}$ are complex conjugate and $s_{3,4}$
are real distinct. Let $r_i$ denote critical values, with $r_{1,2}$
are complex conjugate (not real) and $r_{3,4}$ real. Then it follows
that for $\lambda_i=gs_i$
$$
  \frac{\partial \lambda_3}{\partial r_3}\ne 0
  \quad and \quad
  \frac{\partial \lambda_4}{\partial r_4}\ne 0.
$$
 }
\end{lemma}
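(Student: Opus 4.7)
By Lemma 2 the statement reduces to showing that the quadratic factor
$$Q(s):=(a_1+a_3)\,s^2+4a_0\,s-(a_1+a_3)$$
does not vanish at $s_3$, and symmetrically at $s_4$: the prefactor $(1+s_3^2)^2/\bigl(S\,\hat G_4'(s_3)\bigr)$ is nonzero since $s_3$ is a simple real root of $\hat G_4$ and all four roots are distinct.

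The first step is the polynomial identity
$$\hat G_4(s)-s^4\,\hat G_4(-1/s)=(s^2+1)\,Q(s),$$
which I would verify by direct coefficient comparison. Since $s_3^2+1>0$, this identity shows that $Q(s_3)=0$ iff $-1/s_3$ is another root of $\hat G_4$. The complex-conjugate pair $s_1,\bar s_1$ is non-real while $-1/s_3\in\mathbb{R}$, so the only possibility is $-1/s_3=s_4$; by symmetry of the identity, this is also equivalent to $Q(s_4)=0$. Thus the lemma reduces to ruling out the orthogonality relation $s_3 s_4=-1$ between the two real roots of $\hat G_4$, and a single contradiction will handle both conclusions at once.

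The heart of the argument is to show that $s_3 s_4=-1$ is incompatible with $\mathrm{Im}\,r_{1,2}\neq 0$. I would pass to the angular coordinate $\theta$ on the unit fibre circle via $s=\tan\theta$ and set $f(\theta):=F_4|_{|p|=1}$; homogeneity of degree $4$ makes $f$ a $\pi$-periodic trigonometric polynomial with only even frequencies,
$$f(\theta)=A_0+2\,\mathrm{Re}(A_1 e^{2i\theta})+2\,\mathrm{Re}(A_2 e^{4i\theta}),\qquad A_0\in\mathbb{R}.$$
Centring at $\theta_3$ by $B_k:=A_k e^{2ki\theta_3}$ and $\phi:=\theta-\theta_3$, the two real critical conditions $f'(\theta_3)=0=f'(\theta_3+\pi/2)$ form a $2\times 2$ nondegenerate linear system in $(\mathrm{Im}\,B_1,\mathrm{Im}\,B_2)$ whose only solution is $B_1,B_2\in\mathbb{R}$. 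Consequently $f(\theta_3+\phi)=A_0+2B_1\cos(2\phi)+2B_2\cos(4\phi)$ is real and even in $\phi$.

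The remaining critical points of $f$, namely those producing $s_{1,2}$, then satisfy $\cos(2\phi)=-B_1/(4B_2)$, and using $\cos(4\phi)=2\cos^2(2\phi)-1$ the critical value at such a $\phi$ becomes a real rational expression in $A_0,B_1,B_2$; hence $r_{1,2}$ would both be real, contradicting the hypothesis. The step I expect to demand the most care is simply keeping track of degenerate sub-cases ($a_1+a_3=0$, $B_2=0$ or $s_3=0$): each of them either makes $Q(s_3)\neq 0$ directly or forces $\hat G_4$ to lose degree or acquire repeated roots, contradicting the distinct-root hypothesis, so none of them actually arises.
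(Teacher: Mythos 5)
Your argument is correct, but it follows a genuinely different route from the paper's. The paper proves coprimality of $\hat G_4$ and $\gamma=Q$ by brute force: it divides $\hat G_4$ by $\gamma$, writes out the remainder $R$ explicitly, and in the critical case $R=0$ uses Viete's formulas to parametrize $a_0/a_3$, $a_1/a_3$, $a_2/a_3$ by $\alpha,\beta$ and then checks by direct computation that $\mathrm{Im}\,r_{1,2}=0$, contradicting the hypothesis. Your identity $\hat G_4(s)-s^4\hat G_4(-1/s)=(s^2+1)Q(s)$ (which does check out against the coefficients of $\hat G_4$) replaces all of this with a structural statement: $Q$ vanishes at a real root exactly when the two real critical directions are orthogonal, $s_3s_4=-1$, and this simultaneously explains why $Q(s_3)=0$ and $Q(s_4)=0$ are equivalent. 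Your Fourier argument on the fibre circle --- two orthogonal real critical points force $\mathrm{Im}\,B_1=\mathrm{Im}\,B_2=0$ via a nondegenerate $2\times 2$ system, whence every critical value of $f$ is real --- then reaches the same contradiction ($r_{1,2}$ real) as the paper's Case 2, but conceptually and without the Viete substitutions. Both proofs ultimately hinge on the same phenomenon; yours makes it visible. One small caveat: your blanket claim that each degenerate sub-case ``forces $\hat G_4$ to lose degree or acquire repeated roots'' is not accurate as stated. If $Q\equiv 0$ the involution $s\mapsto -1/s$ must pair the conjugate roots, giving $|s_1|^2=-1$ or $s_1=\pm i$; if $B_2=0$ then $\hat G_4$ factors through $(1+s^2)$ and the non-real roots are exactly $\pm i$. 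In both situations the exclusion comes not from a degree drop but from the paper's Lemma 1 and its Corollary (for irreducible $F_n$, $\pm i$ are never roots of $\hat G_n$), or from the finiteness of $r_{1,2}$; with that correction the case analysis closes, and the sub-cases $s_3=0$ and $a_1+a_3=0$, $a_0\ne 0$ are handled exactly as you say, matching the paper's Cases 3 and 4.
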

\begin{proof}  In order to prove the lemma denote
$$
 s_{1,2}=\alpha\pm i\beta,\ \beta\ne0.
$$
We have to show that in such a case polynomials $\hat{G}_4(s)$ and
$$
 \gamma=(a_3+a_1)s^2+4a_0s-(a_1+a_3)
$$
are relatively prime (notice that $\gamma$ has only real roots).
This can be done as follows. Divide $\hat G_4$ by $\gamma$ with a
remainder $R$. If $\hat{G}_4$ and $\gamma$ have common root then $R$
has is either of degree zero or is equal to zero. Dividing
explicitly one gets for $R$ the following expression:
$$
 R=$$
$$
 =\frac{2 (a_1^3 + a_1^2 a_3 - a_1 (4 a_0 a_2 + a_3^2)+
 a_3 (8 a_0^2-4 a_0 a_2- a_3^2))(a_1 + a_3 - 4 a_0 s)}{(a_1 +
 a_3)^3}.
$$
Notice that $R$ can be a number in two cases only:

 Case 1. $R\neq0$ but $a_0=0$.

 In this case $\gamma$ has two roots $\pm 1$ and the value of
 $\hat G_4$ is the same for both of them
$$
 \hat G_4(\pm1)=2(a_1-a_3).
$$
 But this means that if one of $\pm 1$ is a common root of $\hat G_4$ and $\gamma$
 then in fact both of them are. But in the first case $R$ is
 not zero, contradiction.

 Case 2. $R=0$.

 In this case $\hat G_4$ is divisible by $\gamma$. Denote $\alpha \pm
 i\beta, \mu, -1/ \mu$ the roots of $ \hat G_4$ where the last two are
 the roots of $\gamma$. With these notations one obtain from the Viete's formula the relation
 between them
$$
 \alpha^2+\beta^2-1= \alpha(\mu - 1/ \mu).
$$
 Notice that if $\alpha=0$ then $\beta=1$ and this case is excluded
 by Lemma 1.
 Moreover the Viete's formulas together with this relation lead to
 the following expressions
$$
 \frac{a_0}{a_3} =\frac{1-(\alpha ^2 + \beta^2)^2}{4\alpha},
$$
$$
 \frac{a_1}{a_3} =\alpha ^2 + \beta^2,
$$
$$
 \frac{a_2}{a_3}=\frac{1-3\alpha^2-\beta^2}{2\alpha}.
$$

 Using these formulas one can substitute them into the value of Riemann
 invariant of the root $\alpha +
 i\beta$ to get by direct calculations that its imaginary part vanishes identically:

$$
 v={\rm Im}\frac{\hat F_4 (\alpha +i\beta)}{(1+(\alpha + i\beta)^2)^2}=$$
$$
 {\rm Im}\frac{a_0+a_1(\alpha +i\beta)+a_2(\alpha +i\beta)^2+a_3(\alpha +i\beta)^3}{(1+(\alpha + i\beta)^2)^2}=0.
$$
 But this is not possible by the assumptions.

 The exceptional cases can be treated
analogously:

Case 3. It could happen that $\gamma$ is identically zero, i.e.
$$
 a_1+a_3=0, \ a_0=0.
$$
In this case $\frac{\hat G_4}{a_3}=s^4+2ks^3-6s^2-2ks-1$, where $k=\frac{a_2}{a_3}$.
One can check that this is impossible because such a polynomial has
4 real roots. One can see this for instance through $\frac{\hat F_4}{(1+s^2)^2}$ which in
this case is
$$
 \frac{\hat F_4}{(1+s^2)^2}=\frac{-a_3s+a_2s^2+a_3s^3}{(1+s^2)^2}.
$$
But the expression in the numerator has zero as a root and two more
real roots. Therefore the derivative $\hat G_4$ must have  4 real
roots. Contradiction.

Case 4. In this case it could happen that $a_1+a_3=0$ and the common
root of $\hat G_4$ and $ \gamma$ is $s=0$. But then $a_1=0$.
Therefore also $a_3=0$. But this is not possible since $a_3=g$ is a
metric coefficient and so is always positive. This finishes the
proof of lemma.

\end{proof}
\begin{proof} (Theorem 6).
It follows from Lemma 3 that real eigenvalues are genuinely
non-linear. It was shown in our previous paper \cite{BM} that the
system (2) is Rich (Semi--Hamiltonian). This property is crucial
because it enables to use Lax analysis of blow up along
characteristics. This method uses Ricatti type equation which
$(r_3)_x$ and $(r_4)_x$ must satisfy. Since one knows that
$\lambda_3,\lambda_4$ are genuinely non-linear (Lemma 3) then
$(r_3)_x$ and $(r_4)_x$ must vanish identically since otherwise
there is a blow up after a finite time for Ricatti equation. We have
already proved $r_1=const, r_2=const.$ Thus all Riemann invariants
are constants and so also $a_i$. In particular $g$ is a constant and
thus Riemann metric is flat.
\end{proof}

\section{Concluding remarks and questions}

1. It would be very interesting to know if our Semi-Hamiltonian
system (2) is in fact Hamiltonian and to find Dubrovin--Novikov
bracket of hydrodynamic type (see \cite{DN}).

2. In this paper we show that in the case $n=3,4$ the system (2)is
standard in the elliptic domain $\Omega_e$. It follows from our main
theorems that in the analytic case we can assume that for $n=3,4$
hyperbolic domain is whole torus ${\mathbb T}^2$. It follows from
\cite{Ts} that in Hyperbolic domain Semi-Hamiltonian systems can be
integrated by Tsarev's generalized hodograph method. It would be
very interesting to apply this method to the system (2).

3. It would be natural to find the blow up mechanism for the
quasi-linear system in the conformal model (c). Technically this is
not obvious because the characteristic fields may rotate on the
torus. However it seems that one can use \cite {B1} to perform
analysis along characteristics of the system (1) directly with no
use of Semi-geodesic coordinates.

\end{document}